\def\Xint#1{\mathchoice
   {\XXint\displaystyle\textstyle{#1}}%
   {\XXint\textstyle\scriptstyle{#1}}%
   {\XXint\scriptstyle\scriptscriptstyle{#1}}%
   {\XXint\scriptscriptstyle\scriptscriptstyle{#1}}%
   \!\int}
\def\XXint#1#2#3{{\setbox0=\hbox{$#1{#2#3}{\int}$}
     \vcenter{\hbox{$#2#3$}}\kern-.5\wd0}}
\def\dashint{\Xint-}
\newcommand{\eps}{\varepsilon}
\newcommand{\R}{\mathbb R}
\newcommand{\m}{\mathbf{m}}
\newcommand{\brackets}[1]{\left( #1 \right)}
\newtheorem{theorem}{Theorem}
\newtheorem{proposition}[theorem]{Proposition}
\newtheorem{lemma}[theorem]{Lemma}
\newtheorem{remark}[theorem]{Remark}
\title{\bf One-dimensional in-plane edge domain walls in ultrathin
  ferromagnetic films}
\author[1]{Ross G. Lund} 
\author[1]{Cyrill B. Muratov}
\author[2]{Valeriy V. Slastikov}
\affil[1]{\em Department of Mathematical Sciences, New Jersey
    Institute of Technology, Newark, NJ 07102, USA} 
\affil[2]{\em School of Mathematics, University of Bristol,
    Bristol BS8 1TW, UK}
\date{\today}
\begin{document}

\maketitle

\begin{abstract}
  We study existence and properties of one-dimensional edge domain
  walls in ultrathin ferromagnetic films with uniaxial in-plane
  magnetic anisotropy. In these materials, the magnetization vector is
  constrained to lie entirely in the film plane, with the preferred
  directions dictated by the magnetocrystalline easy axis. We consider
  magnetization profiles in the vicinity of a straight film edge
  oriented at an arbitrary angle with respect to the easy axis. To
  minimize the micromagnetic energy, these profiles form transition
  layers in which the magnetization vector rotates away from the
  direction of the easy axis to align with the film edge.  We prove
  existence of edge domain walls as minimizers of the appropriate
  one-dimensional micromagnetic energy functional and show that they
  are classical solutions of the associated Euler-Lagrange equation
  with Dirichlet boundary condition at the edge. We also perform a
  numerical study of these one-dimensional domain walls and uncover
  further properties of these domain wall profiles.
\end{abstract}

\section{Introduction}
\label{sec:introduction}

The field of ferromagnetism of thin films is currently undergoing a
renaissance driven by advances in theory, experiment and technology
\cite{dennis02,desimone06r,
  braun12,vonbergmann14,stamps14,kent15,manipatruni16}. Study of
magnetic domain walls is remaining at the forefront of this activity
and attracts a lot of attention from engineering, physical and
mathematical communities. These studies are in part motivated by a new
field of applied physics -- {\it spintronics} -- offering a great
promise for creating the next generation of data storage and logic
devices combining spin-dependent effects with conventional
charge-based electronics \cite{allwood05, bader10}.

There are two most common types of domain walls connecting the
distinct preferred directions of magnetization in uniaxial materials:
{\it Bloch and N\'eel walls} \cite{hubert}. Bloch walls appear in bulk
ferromagnets, where the magnetization profile connecting the two
opposite easy axis directions prefers an out-of-plane rotation with
respect to the plane spanned by the wall direction and the easy
axis. In ultrathin ferromagnetic films, on the other hand, the stray
field energy penalizes out-of-plane rotations, and as a result the
magnetization profile is constrained to the film plane. A domain wall
profile connecting the two distinct preferred directions of
magnetization via an in-plane rotation is called a {\it N\'eel wall}.

N\'eel walls have been thoroughly investigated theoretically since
their discovery, and their internal structure is currently fairly well
understood. The main characteristics of the one-dimensional N\'eel
wall profile typically include an inner core, logarithmically decaying
intermediate regions and algebraic tails. These features have been
predicted theoretically, using micromagnetic treatments \cite{hubert,
  dietze61, riedel71, garcia99, mo:jcp06}, and verified experimentally
\cite{berger92}.  Recent rigorous mathematical studies of N\'eel walls
confirmed these predictions and provided more refined information
about the profile of the N\'eel wall, including uniqueness,
regularity, monotonicity, symmetry, stability and precise rate of
decay \cite{melcher03, garcia04, desimone06, capella07,
  cm:non13,my:prsla16}.

Another type of a domain wall has been recently observed in ultrathin
ferromagnetic films with perpendicular anisotropy and strong
antisymmetric exchange referred to as Dzyaloshinskii-Moriya
interaction (DMI). The presence of DMI significantly alters the
structure of domain walls, leading to formation of {\it chiral domain
  walls} in the interior and {\it chiral edge domain walls} at the
boundary of the ferromagnetic sample \cite{thiaville12, rohart13,
  ms:prsla16}. These chiral domain walls and chiral edge domain walls
play a crucial role in producing new types of magnetization patterns
inside a ferromagnet and have been rigorously analyzed in
\cite{ms:prsla16}.

It is well known that magnetization configurations in ferromagnets are
significantly affected by the presence of material boundaries
\cite{hubert, dennis02, desimone06r}. To reduce the stray field, the
magnetization vector tries to stay tangential to the material
boundary, thus minimizing the presence of boundary magnetic
charges. In ultrathin films, this forces the magnetization vector to
lie almost entirely in the film plane and align tangentially along the
film's lateral edges \cite{kohn05arma}. At the same time, these
geometrically preferred directions may disagree with the intrinsic
directions in the bulk film, determined by either a strong in-plane
uniaxial crystalline anisotropy or an external in-plane magnetic
field. The result of this incompatibility is another type of magnetic
domain walls -- {\it edge domain walls}. These domain walls have been
observed experimentally in magnetically coupled bilayers in the shape
of strips with an easy axis normal to the strip \cite{ruhrig90}, and
in single-layer strips with negligible crystalline anisotropy and
varying in-plane uniform magnetic field \cite{mattheis97}.  

The origin of edge domain walls is due to the competition between
magnetostatic, exchange and anisotropy energies. Strong uniaxial
anisotropy defines the two preferred magnetization directions within
the ferromagnetic film plane. On the other hand, at the film edge the
magnetostatic energy penalizes the magnetization component normal to
the edge, and consequently the magnetization prefers to lie in-plane
and tangentially to the edge of the ferromagnetic film. The exchange
energy allows for a continuous transition between these states and as
a result an edge domain wall connecting the direction tangent to the
film edge and the anisotropy easy axis direction is created.

Experimental observations in soft ferromagnetic thin films and
bilayers \cite{mattheis97,ruhrig90} indicate that edge domain walls,
formed near the boundary of the sample due to a misalignement of the
tangential and applied field directions, have an essentially
one-dimensional character. This is confirmed by micromagnetic
simulations in extended ferromagnetic strips performed in several
regimes, including strong in-plane uniaxial anisotropy with no applied
field (see Fig.~\ref{f:strip2d}) and no crystalline anisotropy with
strong in-plane applied field (results not shown). Numerical
simulations suggest that away from the side edges the domain walls
have essentially one-dimensional profiles. Therefore, in order to
investigate these profiles it is enough to model their behavior,
employing a simplified one-dimensional micromagnetic energy capturing
the essential features of the wall profiles. Such a description is
expected to be appropriate for strips of soft ferromagnetic materials
whose thickness does not exceed significantly the exchange length and
whose width is much larger than the N\'eel wall width.

The goal of this paper is to understand the formation of edge domain
walls viewed as global energy minimizers of a reduced one-dimensional
micromagnetic energy. We begin our analysis by deriving a
one-dimensional energy functional describing edge domain walls (see
\eqref{Ethb2}). Since we are specifically interested in
one-dimensional domain wall profiles, we consider the problem on an
unbounded domain consisting of a ferromagnetic film occupying a
half-plane times a fixed interval with small thickness. However, this
setup makes the energy of the wall infinite due to inconsistency
between the preferred magnetization directions at the film edge and
inside the film (see section~\ref{sec:statement-results}). Therefore,
in order to have a well defined minimization problem, we need to
renormalize the one-dimensional energy per unit edge length in a
suitable way (see \eqref{Eb}). We show existence of a minimizer for
this energy, using standard methods of the calculus of variations; see
Theorem~\ref{t:exist}. The main difficulty lies in dealing with
nonlocal magnetostatic energy term and identifying the proper space
where the minimization problem makes sense.
\begin{figure}
  \centering
  \centering \includegraphics[width=6.5in]{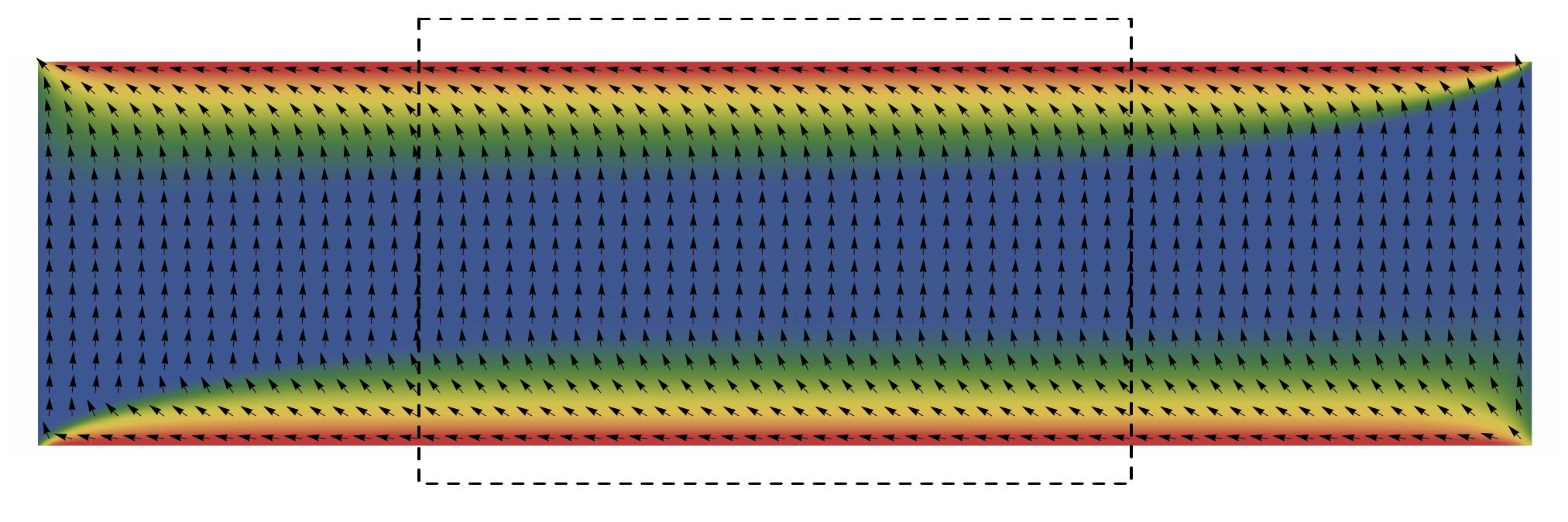}
  \caption{A magnetization configuration containing edge domain walls
    in a strip obtained from micromagnetic simulations of a
    $20.7 \mu$m$\times 5.2\mu$m$\times 4$nm permalloy sample with
    vertical uniaxial anisotropy and no applied field (for further
    details, see section~\ref{sec:num}). The colormap corresponds to
    the angle between the magnetization vector (also shown by arrows)
    and the $y$-axis. Inside the dashed box (i.e., far from the side
    edges) the edge wall profiles are essentially one-dimensional.}
  \label{f:strip2d}
\end{figure}
We continue our analysis by deriving the Euler-Lagrange equation
characterizing the profile of the edge domain wall; see Theorem
\ref{t:regular}.  This seemingly straightforward task, however,
requires a rather careful and proper dealing with the nonlocal energy
term. The main difficulty is related to the fact that we only have
rather limited regularity of the energy minimizing solutions a
priori. The information about further regularity is usually recovered
through the use of the Euler-Lagrange equation and a bootstrap
argument. As this information is not yet available, we need to
carefully analyze the nonlocal term using methods from fractional
Sobolev spaces and recover a weak form of the Euler-Lagrange equation.
After deriving the Euler-Lagrange equation, we can prove higher
regularity of the solutions using an adaptation of the standard
elliptic regularity techniques. However, due to the difficulties
arising in dealing with nonlocality we can only show the $C^2$
regularity of solutions. Further application of the bootstrap argument
is then hindered by the lack of integrability of the contribution of
the nonlocal term to the Euler-Lagrange equation when higher
derivatives of the solution are considered, and the second derivative
of the solution indeed blows up at the film edge.

After establishing existence and regularity of the edge domain wall
profiles, we investigate two specific regimes where we can provide
refined information about the properties of the energy minimizing
solutions of the Euler-Lagrange equation; see Theorem \ref{t:gammanu}
and Theorem \ref{t:gammabeta}. The first regime that we consider is
the regime of relatively small magnetostatic energy, which corresponds
to very thin films. In this regime we show that all minimizers of the
energy \eqref{Eb} are close to the standard local N\'eel wall-type
profile. The second regime is the regime in which the boundary tangent
and the easy axis directions are nearly parallel. In this case we show
that there is a unique minimizer of the energy in \eqref{Eb}, and this
minimizer is close to the uniform state. We corroborate our analytical
findings and provide more information about the profiles of edge
domain walls, using one-dimensional numerical simulations that employ
the method from \cite{mo:jcp06}.

Our paper is organized as follows. In section~\ref{sec:model},
starting from the full three-dimensional micromagnetic model we derive
a variational model for edge domain walls that we intend to
investigate in this paper. Section~\ref{sec:statement-results} is
devoted to a rigorous formulation of the problem and includes the
statements of the main results about existence, regularity and the
qualitative features of edge domain walls. In
sections~\ref{sec:proof-theor-reft}, \ref{sec:proof-theor-reft1},
\ref{sec:proof-theor-gammanu} and \ref{sec:proof-theor-gammabeta} we
prove the main theorems formulated before in
section~\ref{sec:statement-results}. In section~\ref{sec:num}, we
present the results of numerical simulations, compare them with our
analytical findings and discuss open problems. Finally, in
Appendix~\ref{append} we provide a rigorous derivation of the
one-dimensional micromagnetic energy in magnetic strips under natural
assumptions on the one-dimensional magnetization profile.

\section{Model}
\label{sec:model}

Consider a uniaxial ferromagnet occupying a domain
$\Omega \subset \R^3$, with the easy axis oriented along the second
coordinate direction. Then the micromagnetic energy associated with
the magnetization state of the sample reads, in the SI units
\cite{hubert,landau8}:
\begin{multline}
  \label{Ephys}
  E(\mathbf M) = {A \over M_s^2} \int_\Omega |\nabla \mathbf M|^2 \,
  d^3 r + {K \over M_s^2} \int_\Omega (M_1^2 + M_3^2) d^3 r \\
  - \mu_0 \int_\Omega \mathbf M \cdot \mathbf H \, d^3 r + \mu_0
  \int_{\R^3} \int_{\R^3} {\nabla \cdot \mathbf M(\mathbf r) \, \nabla
    \cdot \mathbf M(\mathbf r') \over 8 \pi | \mathbf r - \mathbf r'|}
  \, d^3 r \, d^3 r'.
\end{multline}
Here $\mathbf M = (M_1, M_2, M_3)$ is the magnetization vector that
satisfies $|\mathbf M|=M_s$ in $\Omega$ and $\mathbf M = 0$ in
$\R^3\setminus \Omega$, the positive constants $M_s$, $A$ and $K$ are
the saturation magnetization, exchange constant and the anisotropy
constant, respectively, $\mathbf H$ is an applied external field, and
$\mu_0$ is the permeability of vacuum. In \eqref{Ephys}, the terms in
the order of appearance are the exchange, crystalline anisotropy,
Zeeman and stray field terms, respectively, and
$\nabla \cdot \mathbf M$ is understood distributionally.

In this paper, we are interested in the situation in which $\Omega$ is
a flat ultra-thin film domain, i.e., we have
$\Omega = D \times (0, d)$, where $D \subset \R^2$ is a planar domain
specifying the film shape and $d$ is the film thickness of a few
nanometers. In this case the magnetization is expected to be
essentially independent from the third coordinate, and the full
three-dimensional micromagnetic energy admits a reduction to an energy
functional that depends only on the average of the magnetization over
the film thickness (see, e.g., \cite[Lemma 3]{kohn05arma}; for an
analytical treatment in a closely related context, see
\cite{kmn:arma,m:cmp}). Therefore, we introduce an ansatz
$\mathbf M(x_1, x_2, x_3) = M_s (\mathbf m(x_1, x_2), 0)
\chi_{(0,d)}(x_3)$, where $\mathbf m : \R^2 \to \R^2$ is a
two-dimensional in-plane magnetization vector satisfying
$|\mathbf m| = 1$ in $D$ and $|\mathbf m| = 0$ outside $D$, and
$\chi_{(0,d)}$ is the characteristic function of $(0,d)$.  Next, we
define the exchange length $\ell$, the Bloch wall width $L$, and the
{\em thin film parameter} $\nu$ measuring the relative strength of the
magnetostatic energy \cite{mo:jcp06}:
\begin{align}
  \label{lLQ}
  \ell = \sqrt{2 A \over \mu_0 M_s^2}, \qquad L = \sqrt{A \over K},
  \qquad \nu =  {\mu_0 M_s^2 d \over 2 \sqrt{A K}},
\end{align}
and note that the above ansatz is relevant when $d \lesssim \ell$
\cite{desimone00,kohn05arma,mo:jcp06,doering14,doering16}.  Then,
measuring the energy in the units of $2 A d$ and lengths in the units
of $L$, we obtain the following expression for the energy as a
function of $\mathbf m$ \cite{garcia99}:
\begin{align}
  \label{Em}
  E(\mathbf m) = \frac12 \int_D \left( |\nabla \mathbf m|^2 + m_1^2
  - 2 \mathbf h \cdot \mathbf m \right) d^2 r + {\nu \over 2}
  \int_{\R^2} \int_{\R^2} 
  K_\delta(|\mathbf r - \mathbf r'|) \nabla \cdot \mathbf m(\mathbf
  r) \, \nabla \cdot \mathbf m(\mathbf
  r) \, d^2 r \, d^2 r',
\end{align}
where $\delta = d / L$ is the dimensionless film thickness,
\begin{align}
  \label{Kd}
  K_\delta(r) = {1 \over 2 \pi \delta} \left\{ \ln \left( {\delta +
  \sqrt{\delta^2 + r^2} \over  r} \right) -
  \sqrt{1 + { r^2 \over \delta^2}} + { r \over
  \delta} \right\},
\end{align}
and we set $\mathbf H = K / (\mu_0 M_s) (\mathbf h, 0)$ for
$\mathbf h : \R^2 \to \R^2$, assuming that the applied field lies in
the film plane. More explicitly, assuming that $\partial D$ is of
class $C^2$, we have
\begin{multline}
  \label{Emm}
  E(\mathbf m) = \frac12 \int_D \left( |\nabla \mathbf m|^2 + m_1^2 -
    2 \mathbf h \cdot \mathbf m \right) d^2 r + {\nu \over 2} \int_D
  \int_D K_\delta(|\mathbf r - \mathbf r'|) \nabla \cdot \mathbf
  m(\mathbf r) \, \nabla \cdot \mathbf m(\mathbf r) \, d^2 r \, d^2
  r' \\
  - \nu \int_D \int_{\partial D} K_\delta(|\mathbf r - \mathbf r'|)
  \nabla \cdot \mathbf m(\mathbf r) (\mathbf m(\mathbf r') \cdot
  \mathbf n(\mathbf r')) \, d \mathcal H^1(\mathbf r') \, d^2 r \\
  + {\nu \over 2} \int_{\partial D} \int_{\partial D}
  K_\delta(|\mathbf r - \mathbf r'|) (\mathbf m(\mathbf r) \cdot
  \mathbf n(\mathbf r)) (\mathbf m(\mathbf r') \cdot \mathbf n(\mathbf
  r')) \, d \mathcal H^1(\mathbf r') \, d \mathcal H^1(\mathbf r),
\end{multline}
where $\mathbf n$ is the outward unit normal vector to $\partial D$,
and we took into account that the distributional divergence of
$\mathbf m$ is the sum of the absolutely continuous part in $D$ and a
jump part on $\partial D$.

We now consider the thin film limit introduced in \cite{mo:jcp06} by
sending $\delta$ to zero with $\nu$ and $D$ fixed. Observe that when
$\delta$ is small, we have
\begin{align}
  \label{Kdd}
  K_\delta(r) \simeq {1 \over  4 \pi r} \qquad
  \text{and} \qquad
  \int_{\partial D} K_\delta(|\mathbf r - \mathbf r'|) \, d
  \mathcal H^1(\mathbf r') \simeq {1 \over 2 \pi} \ln \delta^{-1}.
\end{align}
Therefore, when $\mathbf m$ does not vary appreciably on the scale of
$\delta$, to the leading order we have $E(\mathbf m) \simeq
E_\delta(\mathbf m)$, where
\begin{multline}
  \label{Emd}
  E_\delta(\mathbf m) = \frac12 \int_D \left( |\nabla \mathbf m|^2 +
    m_1^2 - 2 \mathbf h \cdot \mathbf m \right) d^2 r + {\nu \over 8
    \pi} \int_D \int_D {\nabla \cdot \mathbf m(\mathbf r) \, \nabla
    \cdot \mathbf m(\mathbf r) \over |\mathbf r - \mathbf r'|} \, d^2
  r \, d^2
  r' \\
  - {\nu \over 4 \pi} \int_D \int_{\partial D} {\nabla \cdot \mathbf
    m(\mathbf r) (\mathbf m(\mathbf r') \cdot \mathbf n(\mathbf r'))
    \over |\mathbf r - \mathbf r'|} \, d \mathcal H^1(\mathbf r') \,
  d^2 r + {\nu \ln \delta^{-1} \over 4 \pi} \int_{\partial D} (\mathbf
  m(\mathbf r) \cdot \mathbf n(\mathbf r))^2 \, d \mathcal H^1(\mathbf
  r).
\end{multline}
Since the last term in \eqref{Emd} blows up as $\delta \to 0$, unless
$\mathbf m \cdot \mathbf n = 0$ a.e. on $\partial D$, in the limit we
recover 
\begin{align}
  \label{E0}
  E_0(\mathbf m) = \frac12 \int_D \left( |\nabla \mathbf m|^2 +
    m_1^2 - 2 \mathbf h \cdot \mathbf m \right) d^2 r + {\nu \over 8
    \pi} \int_D \int_D {\nabla \cdot \mathbf m(\mathbf r) \, \nabla
    \cdot \mathbf m(\mathbf r) \over |\mathbf r - \mathbf r'|} \, d^2
  r \, d^2
  r',
\end{align}
with admissible configurations $\mathbf m \in H^1(D; \mathbb S^1)$
satisfying Dirichlet boundary condition $\mathbf m = s \mathbf t$ on
$\partial D$, where $\mathbf t$ is the positively oriented unit
tangent vector to $\partial D$ and $s : \partial D \to \{-1,1\}$. In
fact, since the trace of $\mathbf m$ belongs to
$H^{1/2}(\partial D; \R^2)$, the function $s$ is necessarily constant
on each connected component of $\partial D$. Note that this creates a
topological obstruction in the case when $D$ is simply connected,
giving rise to boundary vortices at the level of $E_\delta$
\cite{moser04,kohn05arma,kurzke06}. At the same time, it is clear that
for suitable multiply connected domains the considered admissible
class is non-empty. A canonical example of the latter is an annulus
(for a physics overview, see \cite{klaui03}). In the absence of
crystalline anisotropy and applied field, the ground state of the
magnetization in an annulus is easily seen to be a vortex
state. However, this result no longer holds in the presence of
crystalline anisotropy, since the latter does not favor alignment of
$\mathbf m$ with the boundaries. In large annuli, this would lead to
the formation of a boundary layer, in which the magnetization rotates
from the direction tangential to the boundary to the direction of the
easy axis. We call such magnetization configurations {\em edge domain
  walls} (for similar objects in a different micromagnetic context,
see \cite{ms:prsla16}).

\begin{figure}
  \centering
  \centering \includegraphics[width=3in]{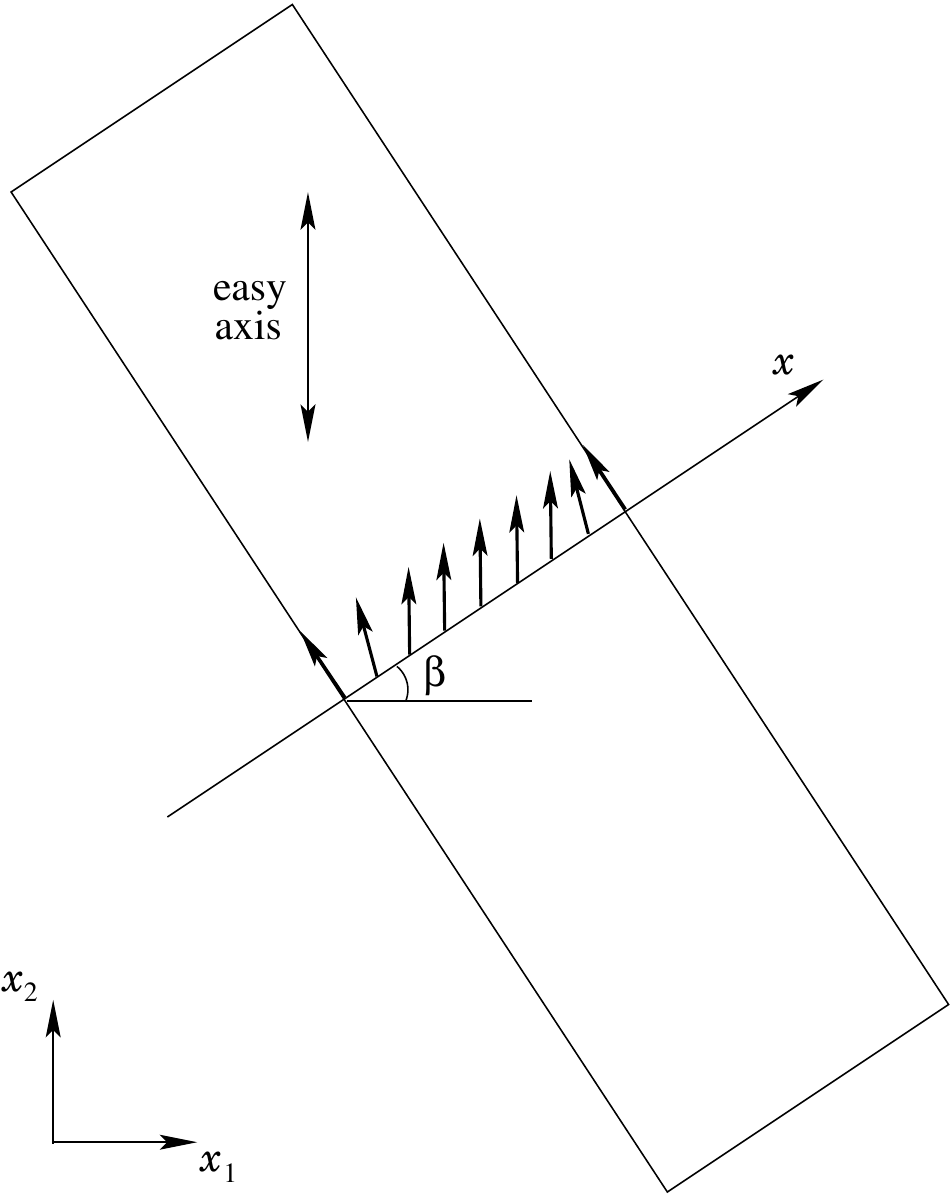}
  \caption{Illustration of the strip geometry.}
  \label{f:strip}
\end{figure}

Focusing on one-dimensional transition profiles in the vicinity of the
boundary, we now consider $D$ to be a strip of width $w$ oriented at
an angle $\beta \in [0, \pi/2]$ with respect to the easy axis (see
Fig. \ref{f:strip}). We define
\begin{align}
  \label{x}
  x = x_1 \cos \beta + x_2 \sin \beta
\end{align}
to be the variable in the direction normal to the strip axis. Then,
with the applied field $\mathbf h$ set to zero, the energy of a
magnetization configuration $\mathbf m = \mathbf m(x)$ per unit length
of the strip is equal to (see Appendix~\ref{append})
\begin{align}
  \label{Ebm}
  E_{\beta,w}(\mathbf m) = \frac12 \int_0^w \left( |m_1'|^2 + |m_2'|^2 +
  m_1^2 \right) dx + {\nu \over 4 \pi} \int_0^w \int_0^w \ln |x -
  y|^{-1} \, 
  m_\beta'(x) m_\beta'(y) \, dx \, dy,
\end{align}
where $m_\beta = \mathbf e_\beta \cdot \mathbf m$, with $\mathbf
e_\beta = (\cos \beta, \sin \beta)$, provided that
\begin{align}
  \label{mbbc}
  m_\beta(0) = m_\beta(w) = 0.
\end{align}
The energy in \eqref{Ebm} may also be rewritten using the operator
$\left( - {d^2 \over dx^2} \right)^{1/2}$ (acting from $H^1(\R)$ to
$L^2(\R)$ and understood via Fourier space, see Appendix~\ref{append})
\begin{align}
  \label{Ebm2}
  E_{\beta,w}(\mathbf m) = \frac12 \int_0^w \left( |m_1'|^2 + |m_2'|^2 +
  m_1^2 \right) dx + {\nu \over 4} \int_{-\infty}^\infty
  m_\beta \left( - {d^2 \over dx^2} \right)^{1/2}  m_\beta \, dx.
\end{align}
Another useful representation of the energy in \eqref{Ebm} that
expresses the double integral in terms of $m_\beta$ rather than its
derivative is (see Appendix~\ref{append})
\begin{align}
  \label{Ebm3}
  E_{\beta,w}(\mathbf m) = \frac12 \int_0^w \left( |m_1'|^2 + |m_2'|^2 +
  m_1^2 \right) dx + {\nu \over 8 \pi} \int_{-\infty}^\infty
  \int_{-\infty}^\infty {(m_\beta(x) - m_\beta(y))^2 \over (x - y)^2}
  \, dx \, dy. 
\end{align}
Lastly, we express the energy in \eqref{Ebm3} in terms of the angle
$\theta$ between $\mathbf m$ and the easy axis in the
counter-clockwise direction:
\begin{align}
  \label{mth}
  \mathbf m = (-\sin \theta, \cos \theta).
\end{align}
With a slight abuse of notation, we get that the energy associated
with $\mathbf m$ is given by
\begin{align}
  \label{Ethb2}
  E_{\beta,w}(\theta) = \frac12 \int_0^w \left( |\theta'|^2 + \sin^2
  \theta \right) dx + {\nu \over 8 \pi} \int_{-\infty}^\infty
  \int_{-\infty}^\infty  {(\sin(\theta(x) -
  \beta) - \sin(\theta(y) - \beta))^2 \over (x - y)^2} \, dx \, dy,
\end{align}
where we set $\theta(x) = \beta$, for all $x \not\in (0, w)$.  The
energy functional in \eqref{Ethb2} is the starting point of our
analysis throughout the rest of this paper. In particular, it is
straightforward to show that minimizers of \eqref{Ethb2} exist among
all $\theta - \beta \in H^1_0(0, w)$, are smooth in the interior and
satisfy the Euler-Lagrange equation
\begin{align}
  \label{Ethb2EL}
  0 = {d^2 \theta \over dx^2} - \sin\theta\cos\theta
  -\frac{\nu}{2}\cos(\theta-\beta) \left( - {d^2 \over dx^2}
  \right)^{1/2} \sin(\theta-\beta) \qquad x \in (0, w),  
\end{align}
where \cite{dinezza12}
\begin{align}
  \label{halflapl}
  \left( - {d^2 \over dx^2}
  \right)^{1/2} u(x) = {1 \over \pi} \, 
  \dashint_{-\infty}^\infty {u(x) - u(y) \over (x - y)^2} \, dy,
\end{align}
and here and everywhere below $\dashint$ denotes the principal value
of the integral. Notice that the Euler-Lagrange equation in
\eqref{Ethb2EL} coincides with the one for the classical problem of
the N\'eel wall \cite{cm:non13}.

\section{Statement of results}
\label{sec:statement-results}

We now turn to the problem of our main interest in this paper, which
is to characterize a single edge domain wall. For this purpose, we
would like to send the parameter $w$ to infinity and obtain an energy
minimizing profile $\theta(x)$ solving \eqref{Ethb2EL} for all $x > 0$
and satisfying $\theta(0) = \beta$ (also setting $\theta(x) = \beta$
for all $x < 0$ in the definition of the last term in
\eqref{Ethb2EL}). We note that for the problem on the semi-infinite
domain with $\beta \in [0, \pi/2]$ the boundary condition at $x = 0$
is equivalent to that in \eqref{mbbc} because of the reflection
symmetry, which makes the energy invariant with respect to the
transformation 
\begin{align}
  \theta \to -\theta, \qquad \beta \to -\beta.  
\end{align}
We also note that for $\beta = 0$ we clearly have $\theta = 0$ as the
unique global minimizer for the energy in \eqref{Ethb2}. Therefore, in
the following we always assume that $\beta > 0$.

As can be seen from standard phase plane analysis, in the absence of
the nonlocal term due to stray field, i.e., when $\nu = 0$, the edge
domain wall solution is explicitly
\begin{align}
  \label{nostray}
  \theta(x) = 2 \arctan \left( e^{-x} \tan {\beta \over 2} \right)
  \qquad \text{for} \qquad x > 0,
\end{align}
noting that for $\beta = \pi/2$ there is also another solution which
is obtained from the one in \eqref{nostray} by a reflection with
respect to $\theta = \pi/2$. Furthermore, for all
$\beta \in (0, \pi/2)$ and $\nu = 0$ this is the unique solution of
\eqref{Ethb2EL} satisfying $\theta(0) = \beta$ and approaching a
constant as $x \to +\infty$. The profile $\theta(x)$ is decreasing
monotonically from $\theta = \beta$ at $x = 0$ to $\theta = 0$ at
$x = +\infty$ and decays exponentially at infinity. It also minimizes
the energy in \eqref{Ethb2} with $\nu = 0$ and $w = \infty$ among all
$\theta - \beta \in \mathring{H}^1_0(\R^+)$. By
$\mathring{H}^1_0(\R^+)$ we mean the Hilbert space obtained as the
completion of the space $C^\infty_c(\R^+)$ with respect to the
homogeneous Sobolev norm
\begin{align}
  \label{H10norm}
  \| u \|_{\mathring{H}^1_0(\R^+)}^2 := \int_0^\infty |u'|^2 dx.
\end{align}
Note that by Sobolev embedding the elements of
$\mathring{H}^1_0(\R^+)$ may be identified with continuous functions
vanishing at $x = 0$ (cf.  \cite[Section 8.3]{brezis}). The minimizing
property of $\theta(x)$ in \eqref{nostray} may be seen directly from
the Modica-Mortola type inequality for the energy with $\nu = 0$ and
$w = \infty$:
\begin{align}
  \label{Eb00}
  E_\beta^0(\theta) := \frac12 \int_0^\infty \left( |\theta'|^2 +
  \sin^2 \theta \right) dx = \int_0^\infty | (\cos \theta)'| \, dx +
  \frac12 \int_0^\infty \left( |\theta'| - |\sin \theta| \right)^2 dx
  \notag \\
  \geq \left| \int_0^\infty (\cos \theta)' \, dx \right| = | \cos
  \beta - \cos \theta_\infty| \geq 1 - \cos \beta. 
\end{align}
In writing \eqref{Eb00}, we used weak chain rule \cite[Corollary
8.11]{brezis} and the fact that $\sin \theta \in H^1(\R^+)$ whenever
$E_\beta^0(\theta) < +\infty$ and, hence, $\sin \theta(x) \to 0$ as
$x \to +\infty$, implying that
$\theta(x) \to \theta_\infty \in \pi \mathbb Z$ \cite[Corollary
8.9]{brezis}. Furthermore, by inspection the case of equality holds if
and only if $\theta$ is given by \eqref{nostray}.

A natural question is whether this type of boundary layer solution
also exists for $\nu > 0$.  We point out from the outset that if one
formally sets $w = \infty$ in \eqref{Ethb2}, one runs into a
difficulty that the nonlocal term in the energy evaluated on the
function in \eqref{nostray} is infinite. Indeed, by positivity of the
nonlocal and anisotropy terms, for any configuration with bounded
energy we would have $\sin \theta \in H^1(\R^+)$ and, therefore,
$\lim_{x \to \infty} \theta(x) = \theta_\infty \in \pi \mathbb Z$, as
before. On the other hand, if
$\theta_\infty - \beta \not\in 2 \pi \mathbb Z$, the nonlocal part of
the energy becomes infinite:
\begin{align}
  \label{log}
  \int_{-\infty}^\infty \int_{-\infty}^\infty {(\sin(\theta(x) -
  \beta) - \sin(\theta(y) - \beta))^2 \over (x - y)^2} \, dx \, dy
  \geq  \int_R^\infty \int_{-\infty}^0
  {\sin^2(\theta(y) - \beta) \over (x - y)^2} \, dx \, dy  \qquad
  \notag \\ 
  \geq \int_R^\infty {\sin^2(\theta(y) - \beta) \over y} \, dy \geq {1
  \over 2} \sin^2 \beta \int_R^\infty {dy \over y} = +\infty,
\end{align}
where we chose a sufficiently large $R > 0$, such that
$\sin^2(\theta(y) - \beta) \geq \frac12 \sin^2(\theta_\infty - \beta)
= \frac12 \sin^2 \beta > 0$
for all $y > R$. This phenomenon has to do with the divergence of the
energy of a pair of edge domain walls minimizing $E_{\beta,w}$ in
\eqref{Ethb2} as $w \to \infty$. Indeed, for $\beta \not= 0$ an edge
domain wall carries a net magnetic charge spread over a region of
width of order 1 near the edge. Therefore, the self-interaction energy
per unit length of a single edge domain wall diverges logarithmically
with $w$, as can be seen by examining the argument in
\eqref{log}. Thus, in order to concentrate on a single edge domain
wall, we need to appropriately renormalize the wall energy by
``subtracting'' the infinite self-interaction energy of a single
wall. To this end, we introduce a smooth cutoff function
$\eta_\beta:\R \to [0,\beta]$ that satisfies $\eta_\beta(x) = \beta$
when $x\leq 0$, $\eta_\beta(x) = 0$ when $x \geq 1$, and
$\eta_\beta'(x) \leq 0$ for all $x \in \R$, and formally subtract its
contribution from the integrand in the last term in
\eqref{Ethb2}. This produces the following {\em renormalized energy}
\begin{multline}
  \label{Eb0}
  E_\beta(\theta) := \frac12 \int_0^\infty \left( |\theta'|^2 + \sin^2
    \theta
  \right) dx \\
  + {\nu \over 8 \pi} \int_{-\infty}^\infty \int_{-\infty}^\infty
  {(\sin (\theta(x) - \beta) - \sin (\theta(y) - \beta))^2 - (\sin
    (\eta_\beta(x) - \beta) - \sin (\eta_\beta(y) - \beta))^2 \over (x
    - y)^2} \, dx \, dy,
\end{multline}
which is clearly finite when $\theta = \eta_\beta$.  Notice that
$\eta_\beta(x)$ mimics the edge domain wall profile near the edge and
thus has the same leading order self-energy as the edge wall, which is
what motivates its introduction in \eqref{Eb0}.

Care is needed in defining a suitable admissible class of functions
$\theta(x)$ in order to make the last term in \eqref{Eb0} meaningful,
as the integrand there may not be in $L^1(\R^2)$. The latter is
related to the logarithmic divergence at infinity of the respective
integrals mentioned earlier. Therefore, we rewrite the energy
$E_\beta(\theta)$ in an equivalent form for smooth functions
$\theta(x)$ satisfying $\theta(x) = \beta$ for all $x < 0$ and
$\theta(x) = \pi n$ for some $n \in \mathbb Z$ and all $x > R \gg 1$:
\begin{multline}
  \label{Eb}
  E_\beta(\theta) = \int_0^\infty \left( \frac12 |\theta'|^2 + \frac12
    \sin^2 \theta + {\nu \over 4 \pi} \cdot {\sin^2 (\theta - \beta) -
      \sin^2 (\eta_\beta - \beta) \over x} \right) dx \\
  + {\nu \over 8 \pi} \int_0^\infty \int_0^\infty {(\sin (\theta(x) -
    \beta) - \sin (\theta(y) - \beta))^2 \over (x - y)^2} \, dx \,
  dy \\
  - {\nu \over 8 \pi} \int_0^\infty \int_0^\infty {(\sin
    (\eta_\beta(x) - \beta) - \sin (\eta_\beta(y) - \beta))^2 \over (x
    - y)^2} \, dx \, dy,
\end{multline}
as can be verified by a direct computation. We observe that this
energy is well-defined, possibly taking value $+\infty$, on the
admissible class
\begin{align}
  \label{A}
  \mathcal{A} := \left\{ \theta \in C \big( \overline{\R^+} \big) :
  \theta - \beta \in \mathring{H}^1_0(\R^+) \right\}.
\end{align}
Indeed, the last term in \eqref{Eb} is independent of $\theta$ and
finite (see Lemma \ref{l:Jetab}). Therefore, the main difficulty with
the definition of $E_\beta$ comes from the last term in the first line
of \eqref{Eb}. Nevertheless, as we show in Lemma \ref{l:sinx}, the
integrand in the first line of \eqref{Eb} may be bounded from below by
an integrable function that does not depend on $\theta$. This makes
the definition of $E_\beta$ in \eqref{Eb} meaningful.

We are now in the position to state our existence result for the edge
domain walls, viewed as minimizers of the one-dimensional energy
$E_\beta$ in \eqref{Eb} over the admissible class $\mathcal A$ in
\eqref{A}.

\begin{theorem}
  \label{t:exist}
  For each $\beta \in (0, \pi/2]$ and each $\nu > 0$, there exists
  $\theta \in \mathcal A$ such that
  $E_\beta(\theta) = \inf_{\tilde \theta \in \mathcal A} E(\tilde
  \theta)$.
  Furthermore, we have $\theta \in L^\infty(\R^+)$ and
  $\lim_{x \to \infty} \theta(x) = \theta_\infty$ for some
  $\theta_\infty \in \mathbb \pi \mathbb Z$.
\end{theorem}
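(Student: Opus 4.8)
The plan is to use the direct method of the calculus of variations on the functional $E_\beta$ in \eqref{Eb}. First I would fix a minimizing sequence $\{\theta_n\} \subset \mathcal A$ with $E_\beta(\theta_n) \to \inf_{\mathcal A} E_\beta$, noting that the infimum is finite: it is bounded above since $\eta_\beta \in \mathcal A$ and $E_\beta(\eta_\beta) < \infty$, and bounded below by the following estimate. In \eqref{Eb} the third line is a finite constant independent of $\theta$ by Lemma~\ref{l:Jetab}, the double integral in the second line is nonnegative, and by Lemma~\ref{l:sinx} (whose proof controls the sign-indefinite $1/x$ term against the anisotropy, e.g.\ by completing the square after writing $\sin^2(\theta-\beta)-\sin^2\beta = \sin\theta\,\sin(\theta-2\beta)$) the sum of the anisotropy and the singular local term is bounded below by a fixed $g \in L^1(\R^+)$ that does not depend on $\theta$. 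Discarding the nonnegative contributions then gives $E_\beta(\theta) \ge \tfrac12 \int_0^\infty |\theta'|^2\,dx - C$ for some constant $C$, so that $\sup_n \|\theta_n - \beta\|_{\mathring H^1_0(\R^+)} < \infty$.

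Next I would extract compactness. Since $\mathring H^1_0(\R^+)$ is a Hilbert space, a subsequence (not relabeled) satisfies $\theta_n - \beta \rightharpoonup \theta - \beta$ weakly, and the Sobolev embedding identifies $\theta - \beta$ with a continuous function vanishing at $x = 0$, so $\theta \in \mathcal A$. The uniform bound on $\|\theta_n'\|_{L^2}$ together with the Cauchy--Schwarz estimate $|\theta_n(x) - \theta_n(y)| \le \|\theta_n'\|_{L^2}\,|x - y|^{1/2}$ shows that $\{\theta_n\}$ is uniformly $\tfrac12$-Hölder continuous; since $\theta_n(0) = \beta$, it is also uniformly bounded on compact sets, so Arzelà--Ascoli (with a diagonal argument over $[0,R]$) upgrades the convergence to $\theta_n \to \theta$ uniformly on every compact subset of $[0,\infty)$, hence pointwise everywhere.

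The heart of the argument is lower semicontinuity, $E_\beta(\theta) \le \liminf_n E_\beta(\theta_n)$. The gradient term $\tfrac12\int_0^\infty |\theta'|^2$ is weakly lower semicontinuous by convexity of the $L^2$ norm. For the nonlocal double integral, the integrand $(\sin(\theta_n(x)-\beta) - \sin(\theta_n(y)-\beta))^2/(x-y)^2$ is nonnegative and converges pointwise by the locally uniform convergence, so Fatou's lemma applies. The delicate object is the sum of the anisotropy $\tfrac12\sin^2\theta_n$ and the sign-indefinite singular term $\tfrac{\nu}{4\pi}(\sin^2(\theta_n-\beta) - \sin^2(\eta_\beta-\beta))/x$: Fatou does not apply to it directly, but subtracting the integrable lower bound $g$ from Lemma~\ref{l:sinx} makes the integrand nonnegative, after which Fatou applied to the shifted integrand and re-adding the finite constant $\int_0^\infty g$ yields lower semicontinuity of this contribution as well. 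Combining the three pieces shows $E_\beta(\theta) \le \liminf_n E_\beta(\theta_n) = \inf_{\mathcal A} E_\beta$, so $\theta$ is a minimizer. I expect this step — establishing lower semicontinuity of the renormalized, non-sign-definite local term — to be the main obstacle, precisely because the usual convexity and Fatou machinery does not apply to it without the auxiliary uniform integrable bound.

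Finally, for the qualitative conclusions I would use that the minimizer has finite energy. Finiteness of $\int_0^\infty(|\theta'|^2 + \sin^2\theta)\,dx$ gives $\sin\theta \in H^1(\R^+)$, since $(\sin\theta)' = \theta'\cos\theta \in L^2(\R^+)$ and $\sin\theta \in L^2(\R^+)$; hence $\sin\theta(x) \to 0$ as $x \to \infty$. Because $\theta$ is continuous, there is $X$ with $|\sin\theta(x)| < \tfrac12$ for all $x \ge X$, so $\theta([X,\infty))$ lies in a single connected component $(k\pi - \tfrac{\pi}{6}, k\pi + \tfrac{\pi}{6})$ of $\{\,t : |\sin t| < \tfrac12\,\}$, and $\sin\theta \to 0$ then forces $\theta(x) \to \theta_\infty = k\pi \in \pi\mathbb Z$. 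Continuity on $[0,\infty)$ together with the existence of this finite limit at infinity yields $\theta \in L^\infty(\R^+)$, completing the proof.
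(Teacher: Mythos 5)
Your proposal is correct and follows essentially the same route as the paper's proof: the direct method with coercivity supplied by Lemmas \ref{l:Jetab} and \ref{l:sinx}, weak compactness in $\mathring{H}^1_0(\R^+)$ plus locally uniform convergence, lower semicontinuity via weak lower semicontinuity of the gradient term and Fatou's lemma (with the integrable minorant) for $J_\beta$ and the sign-indefinite local term, and finally $\sin\theta \in H^1(\R^+)$ forcing $\theta(x) \to \theta_\infty \in \pi\mathbb{Z}$. The only cosmetic differences are that you conclude $\theta \in \mathcal{A}$ directly from weak Hilbert-space convergence (the paper instead observes that for a minimizing sequence the liminf inequalities are equalities, giving strong convergence), and you use a connectedness argument for the limit at infinity where the paper invokes the Modica--Mortola type bound \eqref{mm}; both variants are valid.
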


\noindent We remark that the minimizers obtained in Theorem
\ref{t:exist} do not depend on the specific choice of
$\eta_\beta$. Indeed, denoting by $E_\beta(\theta, \eta_\beta)$ the
value of the energy for a given $\theta$ and $\eta_\beta$, we have for
any $\theta$ and $ \eta_\beta^{(1,2)}$ such that
$E(\theta, \eta_\beta^{(1,2)}) < +\infty$:
\begin{multline}
  E_\beta(\theta, \eta_\beta^{(2)}) - E_\beta(\theta,
  \eta_\beta^{(1)}) = {\nu \over 4 \pi} \int_0^\infty {\sin^2
    (\eta_\beta^{(2)} - \beta) - \sin^2
    (\eta_\beta^{(1)} - \beta) \over x} \, dx \\
  + {\nu \over 8 \pi} \int_0^\infty \int_0^\infty {(\sin
    (\eta_\beta^{(1)}(x) - \beta) - \sin (\eta_\beta^{(1)} (y) -
    \beta))^2 - (\sin (\eta_\beta^{(2)}(x) - \beta) - \sin
    (\eta_\beta^{(2)}(y) - \beta))^2 \over (x - y)^2} \, dx \, dy,
\end{multline}
which is a constant independent of $\theta$. In particular, minimizers
of $E_\beta(\cdot, \eta_\beta^{(1)})$ over $\mathcal A$ coincide with
those of $E_\beta(\cdot, \eta_\beta^{(2)})$.

The result in Theorem \ref{t:exist} should be contrasted with that for
the case $\nu = 0$ discussed at the beginning of this section. For the
latter, for all $\beta \in (0, \pi/2)$ we have existence of a unique
minimizer in $\mathcal A$, which is monotone decreasing and converging
to zero exponentially at infinity. In the case $\nu > 0$, on the other
hand, our result does not exclude a possibility of winding near the
film edge, expressed in the fact that one may have $\theta \to \pi n$
for some $n \not= 0$ as $x \to +\infty$. Similarly, neither uniqueness
nor monotonicity of the energy minimizing profile are guaranteed a
priori, and the decay at infinity is expected to follow a power law
(cf. \cite{cm:non13} and section \ref{sec:num} below).

We now turn to the questions of further regularity and the
Euler-Lagrange equation satisfied by the minimizers obtained in
Theorem \ref{t:exist}. Formally, the Euler-Lagrange equation
associated with \eqref{Eb} coincides with \eqref{Ethb2EL} for all
$x \in \R^+$ (again, extending $\theta$ to $\theta(x) = \beta$ for
$x < 0$). However, care needs to be exercised once again, since the
function $\sin (\theta - \beta)$ no longer belongs to $L^2(\R)$, so
the standard approach to the definition of
$\left( -{d^2 \over dx^2} \right)^{1/2}$ via Fourier space no longer
applies directly. Nevertheless, we show that \eqref{Ethb2EL} still
holds for the minimizers, provided that one uses the integral
representation in \eqref{halflapl} as the definition for
$\left( -{d^2 \over dx^2} \right)^{1/2}$. The latter is meaningful
whenever $\theta$ is smooth, and we have explicitly 
\begin{multline}
  \label{EL}
  \theta''(x) = \sin \theta(x) \cos \theta(x) + {\nu \over 2 \pi}
  \cdot {\sin (\theta(x) - \beta) \cos
    (\theta(x) - \beta) \over x} \\
  + {\nu \over 2 \pi} \cos (\theta(x) - \beta) \left(
    \dashint_0^\infty {\sin (\theta(x) - \beta) - \sin ( \theta(y) -
      \beta) \over (x - y)^2} \, dy \right) \qquad \forall x > 0.
\end{multline}
This picture is made precise in the following theorem.  

\begin{theorem}
  \label{t:regular}
  For each $\beta \in (0, \pi/2]$ and each $\nu > 0$, let $\theta$ be
  a minimizer from Theorem \ref{t:exist}. Then
  $\theta \in C^2(\R^+) \cap C^1(\overline{\R^+}) \cap
  W^{1,\infty}(\R^+)$
  and \eqref{EL} holds. In addition, we have
  $|\theta'(0)| = \sin \beta$ and
  $\lim_{x \to 0^+} |\theta''(x)| = \infty$.
\end{theorem}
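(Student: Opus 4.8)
The plan is to derive the weak form of the Euler--Lagrange equation first, bootstrap interior regularity from it, and finally analyze the boundary behavior at $x=0$. Throughout write $u := \sin(\theta-\beta)$, extended by $u\equiv0$ to $x<0$ (consistent with $\theta\equiv\beta$ there). By Theorem~\ref{t:exist} and \eqref{A}, $u\in L^\infty(\R)$ is continuous with $u(0)=0$ and $u'=\cos(\theta-\beta)\theta'\in L^2(\R)$, but $u\notin L^2(\R)$ since $u\to\sin(\theta_\infty-\beta)=\pm\sin\beta\neq0$ at infinity. The observation that drives everything is that in one dimension $\left(-\frac{d^2}{dx^2}\right)^{1/2}u=Hu'$, where $H$ is the Hilbert transform; as $u'\in L^2$ and $H$ is bounded on $L^2$, this gives $\left(-\frac{d^2}{dx^2}\right)^{1/2}u\in L^2(\R)$ even though $u\notin H^{1/2}(\R)$. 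This is exactly the mechanism by which the renormalization in \eqref{Eb} renders the nonlocal contribution to the equation integrable.

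First I would compute the first variation of $E_\beta$ along $\theta+t\phi$, $\phi\in C_c^\infty(\R^+)$, working with the equivalent $\R^2$ Gagliardo form of the energy (where $\theta\equiv\beta$ on $\R^-$, so the $\eta_\beta$ reference terms are independent of $\theta$ and drop out). The local terms are routine. Setting $v:=\cos(\theta-\beta)\phi$, which is bounded and compactly supported in $\R^+$, the increment $\sin(\theta+t\phi-\beta)-u=tv+O(t^2)$ is compactly supported, so the divergent parts cancel and the variation of the nonlocal term reduces to $\frac{\nu}{4\pi}\iint_{\R^2}\frac{(u(x)-u(y))(v(x)-v(y))}{(x-y)^2}\,dx\,dy$. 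I would justify differentiation under the integral by dominating the difference quotients via $u'\in L^2$ near the diagonal and via the compact support of $v$ against the decaying kernel in the far field. By symmetrization and \eqref{halflapl} this bilinear form equals $\frac{\nu}{2}\int_\R v\left(-\frac{d^2}{dx^2}\right)^{1/2}u\,dx=\frac{\nu}{2}\int_\R v\,Hu'\,dx$, an expression depending only on $u'\in L^2$, which I would legitimize by approximating $u'$ in $L^2$ by smooth functions and passing to the limit. This is the step I expect to be the main obstacle: identifying the correct function-space pairing and justifying the variation when only $\theta-\beta\in\mathring{H}^1_0(\R^+)$ and $u\notin L^2$. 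The outcome is the weak equation, equivalently $\theta''=\sin\theta\cos\theta+\frac{\nu}{2}\cos(\theta-\beta)\left(-\frac{d^2}{dx^2}\right)^{1/2}u$ with right-hand side in $L^2_{\mathrm{loc}}$.

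Next I would bootstrap. From $\theta''\in L^2_{\mathrm{loc}}(\R^+)$ we get $\theta\in H^2_{\mathrm{loc}}\hookrightarrow C^{1,1/2}_{\mathrm{loc}}(\R^+)$, hence $u\in C^{1,1/2}_{\mathrm{loc}}$. An interior estimate for the half-Laplacian, obtained by splitting \eqref{halflapl} into a near part controlled by the local $C^{1,\alpha}$ norm of $u$ and a smooth far part controlled by $\|u\|_{L^\infty}$ and the kernel decay, then yields $\left(-\frac{d^2}{dx^2}\right)^{1/2}u\in C^{0,\alpha}_{\mathrm{loc}}(\R^+)$. Thus the right-hand side is $C^{0,\alpha}_{\mathrm{loc}}$ and $\theta\in C^{2,\alpha}_{\mathrm{loc}}(\R^+)\subset C^2(\R^+)$; writing $\left(-\frac{d^2}{dx^2}\right)^{1/2}u=\frac1\pi\big(\tfrac{u(x)}{x}+\dashint_0^\infty\tfrac{u(x)-u(y)}{(x-y)^2}\,dy\big)$ for $x>0$ shows that \eqref{EL} holds classically on $\R^+$. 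As the introduction notes, the nonlocal term obstructs further bootstrap, consistent with the blow-up established below.

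Finally I would treat the boundary. Cauchy--Schwarz gives $|\theta(x)-\beta|\le\|\theta'\|_{L^2}x^{1/2}$, so $u(x)=O(x^{1/2})$ and $\tfrac{u(x)}{x}=O(x^{-1/2})\in L^1(0,1)$; together with $Hu'\in L^2(0,1)\subset L^1(0,1)$ and boundedness of the remaining terms, this gives $\theta''\in L^1(0,1)$, so $\theta'(0^+)$ exists and $\theta\in C^1(\overline{\R^+})$. Combined with decay of $\theta'$ at infinity (from $\theta\to\theta_\infty$, $\theta'\in L^2$, and uniform interior continuity of $\theta'$) this yields $\theta\in W^{1,\infty}(\R^+)$. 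Multiplying \eqref{EL} by $\theta'$ and integrating, the local part telescopes to $-\tfrac12(\theta'(0))^2+\tfrac12\sin^2\beta$ (using $\theta(0)=\beta$ and $\theta',\sin\theta\to0$ at infinity), while the nonlocal part is $\frac{\nu}{2}\int_0^\infty u'Hu'\,dx=\frac{\nu}{2}\int_\R u'Hu'\,dx=0$ by skew-adjointness of $H$ (equivalently, oddness of $\xi\mapsto\xi|\xi|$ in Fourier space), since $u'\equiv0$ on $\R^-$; this forces $|\theta'(0)|=\sin\beta$. Since $u'(0^+)=\theta'(0)\neq0$ while $u'\equiv0$ on $\R^-$, the function $u'$ has a genuine jump at the origin, so $Hu'$ carries the classical logarithmic singularity $\sim\frac{u'(0^+)}{\pi}\ln x$ of the Hilbert transform of a jump. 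As every other term in \eqref{EL} stays bounded as $x\to0^+$, we conclude $\theta''(x)\sim\frac{\nu\,\theta'(0)}{2\pi}\ln x$, and hence $\lim_{x\to0^+}|\theta''(x)|=\infty$.
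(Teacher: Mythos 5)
Your proposal is correct and, in nearly all respects, follows the same route as the paper: you compute the first variation of the Gagliardo form against $\phi\in C^\infty_c(\R^+)$, with the reference terms dropping out and the passage to the limit justified by near-diagonal/far-field splitting (the content of Lemmas \ref{l:trig}--\ref{l:gateaux} and Proposition \ref{p:ELw}); you identify the resulting bilinear form with $\tfrac{\nu}{2}\int v\, Hu'\,dx$ by approximation, which is necessary since $u\notin L^2(\R)$ (the content of Lemma \ref{l:h2}, where the paper approximates at the level of $\theta$ by $w_n\in C^\infty_c(\R^+)$ rather than approximating $u'$ directly); you deduce $\theta''=g\in L^2(\R^+)$ and hence $C^1(\overline{\R^+})\cap W^{1,\infty}(\R^+)$; you obtain $|\theta'(0)|=\sin\beta$ by multiplying by $\theta'$ and using anti-self-adjointness of the Hilbert transform on $L^2(\R)$ together with $u'\equiv 0$ on $\R^-$; and you derive the blow-up of $\theta''$ from the jump of $u'$ at the origin --- all exactly as in the paper. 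The one step where you genuinely diverge is interior $C^2$ regularity: you invoke an interior Schauder-type estimate for the half-Laplacian ($u\in C^{1,\alpha}_{\mathrm{loc}}\cap L^\infty$ implies $(-d^2/dx^2)^{1/2}u\in C^{0,\alpha}_{\mathrm{loc}}$), whereas the paper stays within the $L^2$/Fourier framework and differentiates the equation once more, computing the distributional derivative of the nonlocal term as $\dashint_0^\infty\frac{u''(y)}{x-y}\,dy+\frac{u'(0)}{x}$ and concluding $\theta'''\in L^2_{\mathrm{loc}}(\R^+)$, hence $\theta\in C^2(\R^+)$. Both are sound; your route imports a standard estimate from the nonlocal-equations literature (which you would need to prove or cite precisely), while the paper's version is self-contained and has the side benefit that the singular term $u'(0)/x$ --- precisely the source of the boundary blow-up --- appears explicitly. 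Two cosmetic flaws, neither a gap: in your boundary paragraph you mix the two representations of the nonlocal term (once it is written as $\tfrac{\nu}{2}\cos(\theta-\beta)Hu'\in L^2(\R^+)$, integrability of $\theta''$ near $0$ is immediate and the $O(x^{-1/2})$ bound on $u(x)/x$ is superfluous); and your stated asymptotics $\theta''\sim\frac{\nu\,\theta'(0)}{2\pi}\ln x$ requires the remainder bound $\dashint_0^1\frac{u'(y)-u'(0^+)}{x-y}\,dy=O(1)$, which does follow from $u'\in C^{0,1/2}(\overline{\R^+})$ (a consequence of $u''\in L^2(\R^+)$) but should be stated; the paper itself only claims the divergence, not the rate.
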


We remark that the last statement in Theorem \ref{t:regular} prevents
the minimizer in Theorem \ref{t:exist} to be smooth up to $x = 0$, in
contrast with the case $\nu = 0$ (see \eqref{nostray}). In turn,
because of the presence of the nonlocal term in \eqref{EL} further
regularity of the minimizer for $x > 0$ cannot be established by a
standard bootstrap argument. A further study of higher regularity of
the domain wall profiles in the film interior would require a finer
simultaneous treatment of the exchange and stray field terms and goes
beyond the scope of the present paper.

We end with a consideration of two parameter regimes in which further
information can be obtained about the detailed structure of the energy
minimizing profiles. In both these regimes the nonlocal term in the
equation may be viewed as a perturbation. The first is the regime when
$\beta \in (0, \pi / 2)$ is arbitrary, but $\nu$ is sufficiently small
depending on $\beta$. Then we have the following result about the
behavior of minimizers.

\begin{theorem}
\label{t:gammanu}
Let $\beta \in (0, \pi/2)$ and let $\theta_\nu$ be a minimizer of
\eqref{Eb} for a given $\nu > 0$. Then, as $\nu \to 0$ the minimizers
$\theta_\nu$ converge uniformly on $[0, +\infty)$ to the minimizer
$\theta_0$ of \eqref{Eb00}, defined in \eqref{nostray}.
\end{theorem}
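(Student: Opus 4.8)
The plan is to treat $E_\beta$ as a small perturbation of the local functional $E_\beta^0$ from \eqref{Eb00} and to invoke stability of the unique minimizer $\theta_0$ of \eqref{nostray} under this perturbation. Writing the energy \eqref{Eb} in the form $E_\beta(\theta) = E_\beta^0(\theta) + \nu\,\mathcal N(\theta)$, where $\mathcal N$ collects the three $\theta$-dependent nonlocal and renormalization terms with the factor $\nu$ stripped off, the first observation is that $\mathcal N$ is bounded below by a constant $-C_0$ independent of $\theta$: the middle double integral is nonnegative, the last double integral is a finite constant by Lemma~\ref{l:Jetab}, and the $1/x$-term is bounded below by a fixed integrable function by Lemma~\ref{l:sinx}. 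Moreover $\mathcal N(\theta_0) < \infty$, since $\theta_0 \in \mathcal A$ decays exponentially and $\eta_\beta$ shares its far-field value $\theta_0,\eta_\beta \to 0$, so the renormalization in \eqref{Eb0}--\eqref{Eb} exactly cancels the logarithmic divergence exhibited in \eqref{log}.

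First I would extract a uniform energy bound. By minimality, $E_\beta(\theta_\nu) \le E_\beta(\theta_0) = E_\beta^0(\theta_0) + \nu\,\mathcal N(\theta_0)$, and combined with $\mathcal N(\theta_\nu) \ge -C_0$ this gives $E_\beta^0(\theta_\nu) \le E_\beta^0(\theta_0) + \nu\,(\mathcal N(\theta_0) + C_0)$, whence $\limsup_{\nu\to 0} E_\beta^0(\theta_\nu) \le E_\beta^0(\theta_0)$ and $\{\theta_\nu - \beta\}$ is bounded in $\mathring H^1_0(\R^+)$. Next, compactness and lower semicontinuity identify the limit. Along any sequence $\nu \to 0$, weak compactness in $\mathring H^1_0(\R^+)$ yields a subsequence with $\theta_\nu - \beta \rightharpoonup \psi$, and the compact embedding into $C[0,M]$ for each $M$ gives $\theta_\nu \to \theta_* := \beta + \psi$ uniformly on compacts, hence a.e. The Dirichlet part of $E_\beta^0$ is weakly lower semicontinuous and the anisotropy part is controlled by Fatou using the a.e. convergence, so $E_\beta^0(\theta_*) \le \liminf E_\beta^0(\theta_\nu) \le E_\beta^0(\theta_0)$. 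Since $\theta_* \in \mathcal A$ and $\theta_0$ is the \emph{unique} minimizer of $E_\beta^0$ over $\mathcal A$ (equality in \eqref{Eb00} forces \eqref{nostray}), we conclude $\theta_* = \theta_0$. As the limit is independent of the subsequence, the whole family converges, $\theta_\nu \to \theta_0$ uniformly on compacts, and squeezing $\liminf$ against $\limsup$ upgrades this to energy convergence $E_\beta^0(\theta_\nu) \to E_\beta^0(\theta_0)$.

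The main obstacle is to promote convergence on compact sets to uniform convergence on all of $[0,+\infty)$, i.e. to rule out loss of mass and spurious winding at infinity. For this I would show that the tail energy is uniformly small: setting $R_\nu(M) = \int_M^\infty (\tfrac12|\theta_\nu'|^2 + \tfrac12\sin^2\theta_\nu)\,dx$, the energy convergence together with weak lower semicontinuity on $[0,M]$ and the uniform-on-compacts convergence of $\sin^2\theta_\nu$ gives $\limsup_\nu R_\nu(M) \le R_0(M)$, and $R_0(M) \to 0$ as $M \to \infty$ because $E_\beta^0(\theta_0) < \infty$. Since $\sin\theta_\nu \in H^1(\R^+)$ vanishes at infinity and $(\sin\theta_\nu)' = \cos\theta_\nu\,\theta_\nu'$, the elementary estimate $\sup_{x \ge M}\sin^2\theta_\nu(x) \le \int_M^\infty (\sin^2\theta_\nu + |\theta_\nu'|^2)\,dx = 2R_\nu(M)$ shows $\sup_{x\ge M}|\sin\theta_\nu|$ is small, uniformly for small $\nu$. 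Choosing $M$ large and $\nu$ small so that $|\sin\theta_\nu| < \tfrac12$ on $[M,\infty)$ confines $\theta_\nu$ there to the disjoint union of the intervals $(\pi k - \tfrac\pi6, \pi k + \tfrac\pi6)$; by continuity $\theta_\nu$ remains in a single such interval, and since $\theta_\nu(M) \to \theta_0(M)$ lies near $0$ this is the branch $k = 0$. Hence $\theta_\nu \to 0$ uniformly on the tail, while $\theta_0 \to 0$ there too, and combined with uniform convergence on $[0,M]$ this gives $\sup_{[0,\infty)}|\theta_\nu - \theta_0| \to 0$.

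I expect this tail analysis to be the crux. The comparison functional $E_\beta^0$ does not by itself control oscillations at infinity, so the argument must exploit both the no-concentration estimate $\limsup_\nu R_\nu(M) \le R_0(M)$ and the topological rigidity coming from $\sin\theta_\nu$ being small, which together exclude the winding into $\pi\mathbb Z \setminus \{0\}$ that is permitted a priori by Theorem~\ref{t:exist}; in particular this simultaneously shows $\theta_\infty = 0$ for small $\nu$. All remaining ingredients — the lower bound on $\mathcal N$, weak compactness, lower semicontinuity, and the uniqueness of $\theta_0$ — are standard.
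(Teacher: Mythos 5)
Your overall strategy is sound, but its very first step rests on a false claim. You assert that $\mathcal N$ (the nonlocal part of \eqref{Eb} with the factor $\nu$ stripped off) satisfies $\mathcal N(\theta)\ge -C_0$ uniformly in $\theta$, attributing to Lemma~\ref{l:sinx} the statement that the $1/x$-term is bounded below by a fixed integrable function. That is not what the lemma says, and the claim itself is false: Lemma~\ref{l:sinx} bounds the \emph{combination} $\tfrac12\sin^2\theta+\tfrac{\nu}{4\pi}x^{-1}\bigl(\sin^2(\theta-\beta)-\sin^2(\eta_\beta-\beta)\bigr)$, and the $1/x$-term alone admits no $\theta$-independent lower bound. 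Indeed, for $\theta\equiv\beta$ (which lies in $\mathcal A$) one has $\sin^2(\theta-\beta)\equiv0$ while $\sin^2(\eta_\beta-\beta)=\sin^2\beta$ for $x\ge1$, so the $1/x$-integral diverges to $-\infty$: in your decomposition $E_\beta(\beta)=E^0_\beta(\beta)+\nu\mathcal N(\beta)$ is an $\infty-\infty$ expression, so the splitting cannot be used as written, and your derivation of $\limsup_{\nu\to0}E^0_\beta(\theta_\nu)\le E^0_\beta(\theta_0)$ does not stand as stated. The repair is exactly the mechanism inside the proof of Lemma~\ref{l:sinx}: for $x\ge1$ write the numerator as $\sin(\theta-2\beta)\sin\theta$ and absorb the negative part into a small multiple of the anisotropy term via Young's inequality, $\tfrac{\nu}{4\pi}x^{-1}\sin(\theta-2\beta)\sin\theta\ge-\tfrac{\nu}{4\pi}\sin^2\theta-\tfrac{\nu}{16\pi x^2}$. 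This yields $E_\beta(\theta_\nu)\ge\bigl(1-\tfrac{\nu}{2\pi}\bigr)E^0_\beta(\theta_\nu)-C\nu$, and since $E_\beta(\theta_\nu)\le E_\beta(\theta_0)=E^0_\beta(\theta_0)+\nu\mathcal N(\theta_0)$ with $\mathcal N(\theta_0)$ finite (your observation about the exponential decay of $\theta_0$ is correct), you recover both the uniform $\mathring{H}^1_0(\R^+)$ bound and the desired $\limsup$ inequality. This absorption is precisely how the paper opens its own proof, using a variant of Lemma~\ref{l:sinx} with $\tfrac14\sin^2\theta$ on the left-hand side.

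Once that step is repaired, the remainder of your argument is correct, and it differs from the paper's in an interesting way. The paper casts the compactness and lower-semicontinuity step as a $\Gamma$-convergence statement, obtains strong $\mathring{H}^1_0$ convergence and hence uniform convergence of $\sin\theta_\nu$ to $\sin\theta_0$ on all of $[0,\infty)$, and then rules out winding via the Modica-Mortola bound \eqref{mm}, $\int_0^\infty|\sin\theta_\nu|\,|\theta_\nu'|\,dx\le 1-\cos\beta+C\nu$, which confines $\theta_\nu$ pointwise near $[0,\beta]$; the mean value theorem then inverts the sine and converts uniform convergence of $\sin\theta_\nu$ into uniform convergence of $\theta_\nu$. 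You instead argue directly with subsequences and the uniqueness of $\theta_0$, and you handle the tail by the no-concentration estimate $\limsup_\nu R_\nu(M)\le R_0(M)$, the elementary bound $\sup_{x\ge M}\sin^2\theta_\nu\le 2R_\nu(M)$, and a connectedness/branch-selection argument pinned down by $\theta_\nu(M)\to\theta_0(M)\approx0$. This excludes the winding permitted a priori by Theorem~\ref{t:exist} and gives uniform smallness of $\theta_\nu$ itself on $[M,\infty)$, so you never need to invert the sine. Both routes are valid: the paper's buys the sharper quantitative confinement of $\theta_\nu$ to an $o(1)$-neighborhood of $[0,\beta]$, while yours is more elementary in the tail and makes the exclusion of winding especially transparent.
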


\noindent We note that we need to avoid the value of $\beta = \pi/2$
in the statement of Theorem \ref{t:gammanu}, because even in the case
$\nu = 0$ there are two possible minimizers: one is given by
\eqref{nostray} and the other by its reflection with respect to
$\theta = \pi / 2$. On the other hand, it is easy to see by an
inspection of the proof of Theorem \ref{t:gammanu} that convergence in
its statement is uniform in $\beta$ for all
$0 < \beta \leq \beta_0 < \pi/2$. Note also that the statement of
Theorem \ref{t:gammanu} implies that $\theta_\nu(x) \to 0$ as
$x \to +\infty$ for all $\nu > 0$ sufficiently small depending on
$\beta$. In other words, the domain wall profiles cannot exhibit
winding in this parameter range.

The second regime is for fixed values of $\nu > 0$ and sufficiently
small $\beta > 0$. Here we have the following result.

\begin{theorem}
  \label{t:gammabeta}
  Let $\nu >0$, let $0<\beta \leq \beta_0$ for some
  $\beta_0(\nu) > 0$, and let $\theta_\beta$ be a minimizer of
  \eqref{Eb}. Then $\theta_\beta$ is unique, and $\theta_\beta \to 0$
  uniformly on $[0, +\infty)$ as $\beta \to 0$.
\end{theorem}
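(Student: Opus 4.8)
The plan is to prove the two assertions in sequence: first an a priori estimate showing that every minimizer $\theta_\beta$ satisfies $\|\theta_\beta\|_{L^\infty(\R^+)} \to 0$ as $\beta \to 0$ (which already yields the claimed uniform convergence), and then to upgrade this smallness to uniqueness by establishing strict convexity of $E_\beta$ on a small ball. Throughout I extend $\theta_\beta$ by $\beta$ to $x<0$ as in \eqref{Eb0}.

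For the a priori bound, I would first obtain an energy upper bound of order $\beta^2$ by testing against a competitor. Taking $\eta_\beta = \beta \eta_0$ for a fixed profile $\eta_0$, the renormalized nonlocal term in \eqref{Eb0} vanishes at $\theta = \eta_\beta$, so $E_\beta(\theta_\beta) \le E_\beta(\eta_\beta) = \tfrac12\int_0^\infty(|\eta_\beta'|^2 + \sin^2\eta_\beta)\,dx = O(\beta^2)$. Using the representation \eqref{Eb}, in which the double integral over $(\R^+)^2$ is nonnegative and the $\eta_\beta$ double integral is a $\theta$-independent constant of size $O(\beta^2)$ (cf.\ Lemma~\ref{l:Jetab}), this bounds $\tfrac12\int_0^\infty(|\theta_\beta'|^2 + \sin^2\theta_\beta)\,dx$ up to the contribution of the $1/x$ term $\tfrac{\nu}{4\pi}\int_0^\infty x^{-1}\big(\sin^2(\theta_\beta-\beta) - \sin^2(\eta_\beta-\beta)\big)\,dx$. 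Writing the integrand as $x^{-1}\sin\theta_\beta\,\sin(\theta_\beta - 2\beta)$ for $x>1$ and controlling its far field, one shows this term is bounded below by $-o(1)$, whence $\int_0^\infty(|\theta_\beta'|^2 + \sin^2\theta_\beta)\,dx \to 0$. The Modica–Mortola inequality \eqref{Eb00} then bounds the total variation of $\cos\theta_\beta$ by this energy; since $\cos\theta_\beta(0) = \cos\beta$ and $\cos\theta_\beta(\infty) = \cos\theta_\infty$, a variation tending to $0$ both rules out winding ($\theta_\infty \ne 0$ would force the variation to exceed a fixed positive constant) and forces $\cos\theta_\beta \ge 1-o(1)$ throughout, i.e.\ $\|\theta_\beta\|_{L^\infty} \to 0$. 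This proves $\theta_\beta \to 0$ uniformly.

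For uniqueness, fix $\epsilon < \pi/4$ and set $B = \{\theta \in \mathcal A : \|\theta\|_{L^\infty} \le \epsilon\}$, which is convex and, by the previous step, contains every minimizer once $\beta_0(\nu)$ is small. It suffices to show $E_\beta$ is strictly convex on $B$, which follows if $\delta^2 E_\beta(\theta)[\xi] > 0$ for every $\theta \in B$ and every nonzero $\xi$ arising as a difference of two elements of $B$ (so $\xi \in \mathring H^1_0(\R^+)$, $\|\xi\|_{L^\infty} \le 2\epsilon$). Differentiating \eqref{Eb0} twice gives
\[
\delta^2 E_\beta(\theta)[\xi] = \int_0^\infty \big(|\xi'|^2 + \cos 2\theta\,\xi^2\big)\,dx + \frac{\nu}{4\pi}\int_{-\infty}^\infty\int_{-\infty}^\infty \frac{\big(\cos(\theta(x)-\beta)\,\xi(x) - \cos(\theta(y)-\beta)\,\xi(y)\big)^2}{(x-y)^2}\,dx\,dy + R,
\]
where $R$ collects the sign-indefinite contribution. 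On $B$ one has $\cos 2\theta \ge \cos 2\epsilon > 0$, and the Gagliardo term is nonnegative, so the explicit terms dominate $\int_0^\infty |\xi'|^2\,dx + c\int_0^\infty \xi^2\,dx$ with $c > 0$; strict convexity, and hence uniqueness, then reduces to absorbing $R$.

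The crux, and the main obstacle, is the nonlocal remainder
\[
R = \frac{\nu}{4\pi}\int_{-\infty}^\infty\int_{-\infty}^\infty \frac{\big(\sin(\theta(x)-\beta) - \sin(\theta(y)-\beta)\big)\big(\sin(\theta(y)-\beta)\,\xi(y)^2 - \sin(\theta(x)-\beta)\,\xi(x)^2\big)}{(x-y)^2}\,dx\,dy,
\]
which carries two factors of $\sin(\theta-\beta) = O(\beta)$ on $B$. One expects $|R| \le C\beta^2\big(\int_0^\infty|\xi'|^2\,dx + \int_0^\infty\xi^2\,dx\big)$, whence $\delta^2 E_\beta > 0$ for $\beta$ small. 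Making this rigorous requires splitting the inner product and estimating trilinear nonlocal integrals, using the $W^{1,\infty}(\R^+)$ regularity of Theorem~\ref{t:regular} (with the scaling $|\theta'(0)| = \sin\beta$) together with the interpolation bound for the $\dot H^{1/2}$ seminorm in terms of $\|\xi\|_{L^2}$ and $\|\xi'\|_{L^2}$. The same far-field control of the nonlocal interaction underlies the lower bound for the $1/x$ term in the a priori estimate, and this nonlocal bookkeeping is where the difficulty is concentrated.
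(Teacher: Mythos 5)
Your first assertion (uniform convergence) is proved essentially the same way as in the paper: the paper tests against the competitor $\phi_\beta(x)=\max\{\beta(1-x),0\}$, gets $E_\beta(\theta_\beta)\le C\beta^2$, handles the sign-indefinite $1/x$ term via the identity $\sin^2\beta-\sin^2(\theta_\beta-\beta)=\sin\theta_\beta\sin(2\beta-\theta_\beta)$ and Young's inequality (absorbing $\tfrac14\int\sin^2\theta_\beta$ into the left-hand side), and concludes $\tfrac14\int_0^\infty(|\theta_\beta'|^2+\sin^2\theta_\beta)\,dx\le C\beta^2$, whence uniform smallness by the Modica--Mortola bound \eqref{mm}. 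Your sketch follows exactly this route, and the steps you leave implicit ("controlling its far field") are precisely the absorption argument above; this half is fine.

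The uniqueness half, however, has a genuine gap, and it sits exactly where you say the "crux" is: the absorption estimate for the sign-indefinite remainder $R$ is never proved, only announced as "one expects." That estimate is the entire content of the uniqueness claim in your approach, and it is not routine: it requires (i) justifying twice-differentiability of the nonlocal term along segments whose direction $\xi=\theta_1-\theta_2$ is \emph{not} compactly supported (Lemma~\ref{l:gateaux} only covers $\phi\in C^\infty_c(\R^+)$, so the paper's variational machinery does not directly apply), and (ii) trilinear Gagliardo estimates with product and interpolation inequalities. Moreover, as written your smallness claim is inconsistent: on the ball $B$ of \emph{fixed} radius $\eps<\pi/4$ one only has $\sin(\theta-\beta)=O(\eps+\beta)$, not $O(\beta)$; to get $\beta$-sized factors you must shrink the ball radius to $O(\beta)$ (which the energy bound $\int\sin^2\theta_\beta\le C\beta^2$ plus Modica--Mortola does permit, but you never do this). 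The paper avoids all of this with one move that your argument is missing: the nonlinear change of variables $u_\beta=\sin(\theta_\beta-\beta)$. In the variable $u_\beta$ the nonlocal double integral and the $1/x$ term become \emph{exactly quadratic}, hence convex with no remainder, while the local terms become
\begin{align}
  \frac12\int_0^\infty\left(\frac{|u_\beta'|^2}{1-u_\beta^2}+\sin^2(\beta+\arcsin u_\beta)\right)dx,
\end{align}
which is a strictly convex functional of $u_\beta$ once $\|u_\beta\|_{L^\infty}$ is small (guaranteed by the first half). Strict convexity on the convex set of admissible $u$'s gives uniqueness immediately, with no second-variation computation and no trilinear nonlocal bookkeeping. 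Your route may well be completable along the lines you indicate, but as it stands the decisive estimate is assumed rather than established.
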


\noindent Again, the statement of Theorem \ref{t:gammabeta} implies
that $\theta_\beta(x) \to 0$ as $x \to +\infty$ for all $\beta > 0$
sufficiently small depending on $\nu$.

\section{Proof of Theorem \ref{t:exist}}
\label{sec:proof-theor-reft}

We begin by defining
\begin{align}
  \label{Jbeta}
  J_\beta(\theta) := \int_{0}^{\infty}\int_{0}^{\infty}
  \frac{\brackets{\sin(\theta(x)-\beta) -
  \sin(\theta(y)-\beta)}^2}{(x-y)^2} \, dx \, dy,
\end{align}
and making the following basic observation.

\begin{lemma}
  \label{l:Jetab}
  We have
  \begin{align}
    J_\beta(\eta_\beta) < +\infty.
  \end{align}
\end{lemma}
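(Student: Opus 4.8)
The plan is to exploit the very specific structure of $\eta_\beta$: it is smooth (hence Lipschitz), equals $\beta$ for $x \le 0$ and $0$ for $x \ge 1$, and is monotone in between. Accordingly I would set $u(x) := \sin(\eta_\beta(x) - \beta)$, which is a bounded Lipschitz function on $\overline{\R^+}$ whose derivative $u'(x) = \eta_\beta'(x)\cos(\eta_\beta(x)-\beta)$ is supported in $[0,1]$; in particular $u(x) = \sin(-\beta) = -\sin\beta$ is constant for all $x \ge 1$, while $u(0) = 0$. Writing $L := \|u'\|_{L^\infty(\R^+)} < \infty$ for its Lipschitz constant, the quantity $J_\beta(\eta_\beta)$ becomes $\int_0^\infty\int_0^\infty (u(x)-u(y))^2 (x-y)^{-2}\,dx\,dy$, and the goal is to bound it using these two features: Lipschitz continuity to control the near-diagonal singularity, and the constancy of $u$ past $x=1$ to control the behavior at infinity.

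First I would split the domain $(0,\infty)^2$ into $D_1 = (0,1)^2$, $D_2 = [1,\infty)^2$, and the two symmetric cross strips $D_3 = (0,1)\times[1,\infty)$ and its reflection. On $D_1$ the Lipschitz bound gives $(u(x)-u(y))^2 \le L^2 (x-y)^2$, so the integrand is bounded by $L^2$ on a set of measure one, contributing at most $L^2$. On $D_2$ both arguments exceed $1$, so $u(x) = u(y) = -\sin\beta$ and the integrand vanishes identically.

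The only genuine work is on the cross region $D_3$, where one must simultaneously use the smallness of the numerator and the off-diagonal decay of $(x-y)^{-2}$. Here $u(y) = -\sin\beta = u(1)$ for $y \ge 1$, so by the Lipschitz estimate $(u(x)-u(y))^2 = (u(x)-u(1))^2 \le L^2(1-x)^2$ for $x \in (0,1)$. Since $y \ge 1 > x$, the inner integral is explicit, $\int_1^\infty (y-x)^{-2}\,dy = (1-x)^{-1}$, so the $D_3$ contribution is at most $\int_0^1 L^2(1-x)^2 (1-x)^{-1}\,dx = \tfrac12 L^2$, and the reflected strip contributes the same by symmetry. Summing the three pieces yields $J_\beta(\eta_\beta) \le 2L^2 < \infty$.

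I expect no serious obstacle here: the statement amounts to the observation that a Lipschitz function whose derivative has compact support has finite Gagliardo-type $H^{1/2}$ seminorm on the half-line. The one point requiring a little care is the cross region $D_3$, where the factor $(1-x)^2$ coming from the Lipschitz estimate must exactly absorb the $(1-x)^{-1}$ produced by integrating $(x-y)^{-2}$ up to $y=1$; bounding the numerator by $|u(x)-u(1)|$ rather than by $|u(x)-u(y)|$ is what makes this cancellation transparent and avoids a spurious logarithmic divergence near the corner $(1,1)$.
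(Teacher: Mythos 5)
Your proof is correct and follows essentially the same route as the paper's: the same decomposition of $(0,\infty)^2$ into $(0,1)^2$, the cross strips, and $[1,\infty)^2$ (where the integrand vanishes because $\eta_\beta$ is constant there), with the Lipschitz bound $|u(x)-u(y)|\le L|x-y|$ taming the diagonal and, on the cross strips, the bound $|u(x)-u(1)|\le L(1-x)$ cancelling the $(1-x)^{-1}$ from the explicit inner integral. The only difference is cosmetic: you carry explicit constants ($J_\beta(\eta_\beta)\le 2L^2$) where the paper merely observes that the integrands on the two nontrivial regions are essentially bounded.
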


\begin{proof}
  Since $\eta_\beta(x) = 0$ for all $x \geq 1$, we may write
  \begin{align}
    J_\beta(\eta_\beta) 
    & = \int_0^1 \int_0^1 \frac{\brackets{\sin(\eta_\beta (x)-\beta) -
      \sin(\eta_\beta (y)-\beta)}^2}{(x-y)^2} \, dx \, dy \notag \\
    & \qquad + 2 \int_0^1
      \int_1^\infty \frac{(\sin (\eta_\beta (y)-\beta) - \sin \beta)^2
      }{(x-y)^2} \,
      dx \, dy \notag \\
    & = \int_0^1 \int_0^1 \frac{\brackets{\sin(\eta_\beta (x)-\beta) -
      \sin(\eta_\beta (y)-\beta)}^2}{(x-y)^2} \, dx \, dy \notag \\
    & \qquad + 2 \int_0^1 \frac{(
      \sin(\eta_\beta (y)-\beta) - \sin \beta)^2 }{1-y} \,
      dy.     \label{Jbetaest} 
  \end{align}
  By smoothness of $\eta_\beta$ we have
  \begin{align}
    |\sin(\eta_\beta (x)-\beta) - \sin(\eta_\beta (y)-\beta)| \leq C
    |x - y|,  
  \end{align}
  for some $C > 0$ depending only on $\eta_\beta$. Therefore, the
  integrands in the right-hand side of \eqref{Jbetaest} are essentially
  bounded, yielding the result.
\end{proof}

With the result of Lemma \ref{l:Jetab} in hand, we can write the
energy in \eqref{Eb} as
\begin{align}
  E_\beta(\theta) = F_\beta(\theta) + {\nu
  \over 8 \pi}  J_\beta(\theta) -  {\nu \over 8 \pi}
  J_\beta(\eta_\beta),
\end{align}
where
\begin{align}
  \label{Fb}
  F_\beta(\theta) := \int_0^\infty \left( \frac12 |\theta'|^2 + \frac12
  \sin^2 \theta + {\nu \over 4 \pi} \cdot {\sin^2 (\theta - \beta) -
  \sin^2 (\eta_\beta - \beta) \over x} \right) dx. 
\end{align}
While $J_\beta(\theta) \geq 0$ by definition, we also have the
following result concerning $F_\beta(\theta)$.

\begin{lemma}
  \label{l:sinx}
  For every $\theta \in \mathcal A$ we have
  \begin{align}
    \label{intsinx}
    \frac12 \sin^2 \theta(x) + {\nu \over 4 \pi} \cdot {\sin^2
    (\theta(x) - \beta) - \sin^2 (\eta_\beta(x) - \beta) \over x}
    \geq - {C \over 1 + x^2} \qquad \forall x > 0,
  \end{align}
  for some $C > 0$ depending only on $\beta$, $\nu$ and
  $\eta_\beta$. Furthermore, $F_\beta(\theta)$ is bounded below
  independently of $\theta$.
\end{lemma}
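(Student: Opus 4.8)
The plan is to prove the pointwise bound \eqref{intsinx} by splitting $\R^+$ into a bounded region near the origin and a far-field region, handling each separately, and then to integrate \eqref{intsinx} against the manifestly nonnegative exchange term to obtain the lower bound on $F_\beta$. First observe that the two terms $\tfrac12\sin^2\theta(x)$ and $\tfrac{\nu}{4\pi}\sin^2(\theta(x)-\beta)/x$ are nonnegative, so the only negative contribution to the left-hand side of \eqref{intsinx} is $-\tfrac{\nu}{4\pi}\sin^2(\eta_\beta(x)-\beta)/x$. Since $\eta_\beta$ is smooth with $\eta_\beta(0)=\beta$, we have $|\sin(\eta_\beta(x)-\beta)|\le|\eta_\beta(x)-\beta|\le Lx$ on $(0,1]$ with $L=\|\eta_\beta'\|_\infty$, while $\sin^2(\eta_\beta(x)-\beta)=\sin^2\beta$ for $x\ge1$; in either case $\sin^2(\eta_\beta(x)-\beta)/x$ is bounded on any interval $(0,x_0]$. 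Hence the left-hand side of \eqref{intsinx} is bounded below by a constant on $(0,x_0]$, and since $(1+x^2)^{-1}$ is bounded below by a positive constant there, \eqref{intsinx} holds on $(0,x_0]$ once $C$ is taken large enough.

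The delicate region is $x\ge x_0$ (with $x_0\ge1$ large), where $\eta_\beta\equiv0$ and $(1+x^2)^{-1}\sim x^{-2}$, so that merely discarding the positive terms yields only a bound of order $-x^{-1}$, which has divergent integral and is therefore useless. Here I write the left-hand side of \eqref{intsinx} as $F(\theta(x),a)$ with $a=\nu/(4\pi x)$ and
\[
F(\theta,a) := \tfrac12\sin^2\theta + a\bigl(\sin^2(\theta-\beta)-\sin^2\beta\bigr),
\]
and study $m(a):=\min_{\theta\in\R}F(\theta,a)$, which is attained since $F$ is $\pi$-periodic in $\theta$. The key is a first-order cancellation: at $a=0$ the minimizers of $F(\cdot,0)=\tfrac12\sin^2\theta$ are exactly $\theta\in\pi\mathbb Z$, and these are nondegenerate since $\partial_\theta^2F(\cdot,0)=\cos2\theta=1$ there. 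By the implicit function theorem the global minimizer $\theta(a)$ stays near $\pi\mathbb Z$ and depends smoothly on $a$ for $a\le a_0$ small, so $m(a)=F(\theta(a),a)$ is smooth; the envelope theorem gives $m'(0)=\partial_aF(\theta(0),0)=\sin^2(\theta(0)-\beta)-\sin^2\beta=0$, because $\theta(0)\in\pi\mathbb Z$ forces $\sin^2(\theta(0)-\beta)=\sin^2\beta$. Since also $m(0)=0$, Taylor's theorem yields $m(a)\ge-C'a^2$ for $a\le a_0$, whence $F(\theta(x),a)\ge-C'\nu^2/(16\pi^2x^2)$ for $x\ge x_0:=\nu/(4\pi a_0)$, which is $\ge-C(1+x^2)^{-1}$ for $C$ large. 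This establishes \eqref{intsinx}.

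Finally, integrating \eqref{intsinx}, using $\int_0^\infty(1+x^2)^{-1}\,dx=\pi/2$ and the nonnegativity of $\tfrac12|\theta'|^2$, gives
\[
F_\beta(\theta) \ge \tfrac12\int_0^\infty|\theta'|^2\,dx - \tfrac{C\pi}{2} \ge -\tfrac{C\pi}{2},
\]
which is independent of $\theta$, completing the proof.

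The main obstacle is the far-field estimate of the second paragraph. The naive lower bound obtained by discarding the positive anisotropy and nonlocal-potential terms decays only like $-x^{-1}$ and is not integrable; it is precisely the first-order-in-$a$ cancellation $m'(0)=0$ — reflecting that the leading anisotropy term $\tfrac12\sin^2\theta$ is minimized exactly at the points $\theta\in\pi\mathbb Z$ where the sign-indefinite perturbation $\sin^2(\theta-\beta)-\sin^2\beta$ vanishes — that upgrades the bound to the integrable order $x^{-2}$ and makes $F_\beta$ well-defined and bounded below on all of $\mathcal A$.
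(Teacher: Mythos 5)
Your proof is correct in its overall architecture---the same near/far splitting of $\R^+$ and the same final integration step as the paper---but your far-field mechanism is genuinely different, and one step of it needs shoring up. The paper handles $x\ge 1$ (where $\eta_\beta\equiv 0$) by the identity $\sin^2(\theta-\beta)-\sin^2\beta=\sin(\theta-2\beta)\sin\theta$ followed by Young's inequality, $x^{-1}\sin(\theta-2\beta)\sin\theta\ge -\eps\sin^2\theta-\tfrac{1}{4\eps x^2}$, absorbing $\eps\sin^2\theta$ into the anisotropy term by choosing $\eps$ small relative to $\nu$; this gives $-C/x^2$ for \emph{all} $x\ge 1$ with an explicit constant. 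You instead minimize the pointwise expression over $\theta$ and extract the quadratic bound $m(a)\ge -C'a^2$ from the first-order cancellation $m'(0)=0$ via the implicit function theorem, the envelope theorem and Taylor's theorem. The underlying cancellation is the same one the paper exploits (the perturbation $\sin^2(\theta-\beta)-\sin^2\beta$ vanishes exactly at the minimizers $\pi\mathbb Z$ of $\tfrac12\sin^2\theta$), but your packaging is soft and structural: it would apply to any bounded smooth perturbation vanishing at the nondegenerate minimizers of the leading term, with no trig identity needed. The price is a smallness threshold $a_0$ (hence a large $x_0$) and non-explicit constants. Note, moreover, that your key bound follows in one line and for all $a$: by the same identity, $F(\theta,a)\ge\tfrac12\sin^2\theta-|a|\,|\sin\theta|\ge-\tfrac12 a^2$ (minimize $\tfrac12 s^2-|a|s$ over $s\ge0$), which is precisely the paper's Young's-inequality step; the perturbation machinery can thus be removed entirely.

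The step to shore up: ``by the implicit function theorem the global minimizer $\theta(a)$ stays near $\pi\mathbb Z$'' conflates two facts. The IFT yields a locally unique smooth branch of \emph{critical points} through each point of $\pi\mathbb Z$; it does not by itself place the \emph{global} minimizer on that branch. You need a short compactness argument: since $F(0,a)=0$ we have $m(a)\le 0$, whereas any $\theta\in[0,\pi]$ at distance at least $\delta$ from $\{0,\pi\}$ satisfies $F(\theta,a)\ge\tfrac12\sin^2\delta-2|a|>0$ for $a$ small; hence global minimizers converge to $\pi\mathbb Z$ as $a\to0$ and, by the local uniqueness furnished by the IFT, must coincide with the smooth branch, making $m(a)=F(\theta(a),a)$ smooth near $a=0$. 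With that inserted (or with the one-line bound above replacing the whole second paragraph), your argument is complete, and your constants depend only on $\beta$, $\nu$ and $\eta_\beta$, as the lemma requires.
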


\begin{proof}
  Since by the definition of $\eta_\beta$ we have
  $|\eta_\beta(x) - \beta| \leq C x$ for some $C > 0$ and all
  $x \in (0,1)$, the left-hand side of \eqref{intsinx} is bounded
  below on this interval. At the same time, by trigonometric
  identities and Young's inequality we have for all $x \geq 1$ and any
  $\eps > 0$
\begin{align}
  {\sin^2 (\theta - \beta) -
  \sin^2 (\eta_\beta - \beta) \over x} = {
  \sin(\theta - 2 \beta) \sin \theta \over x} \geq -\eps \sin^2 \theta
  - {1 \over 4 \eps x^2}.
\end{align}
Hence, choosing $\eps$ sufficiently small depending only on $\nu$, we
can control the left-hand side of \eqref{intsinx} from below by
$-C / x^2$ for all $x \in (1, \infty)$, where the constant $C > 0$
depends only on $\nu$. Combining the estimates on the two intervals
then yields \eqref{intsinx}. Finally, the last statement follows from
\eqref{intsinx} and the fact that the integrand in \eqref{Fb} is
measurable on $\R^+$.
\end{proof}

\begin{proof}[Proof of Theorem \ref{t:exist}]
  Since
  $\inf_{\tilde\theta \in \mathcal A} E_\beta(\tilde\theta) \leq
  E_\beta(\eta_\beta) < +\infty$,
  and since $E_\beta(\theta) \geq -C$ for some $C > 0$ and all
  $\theta \in \mathcal A$ by Lemmas \ref{l:Jetab} and 
  \ref{l:sinx}, there exists a sequence of $\theta_n \in \mathcal A$
  such that
  \begin{align}
    -\infty < \inf_{\tilde\theta \in
    \mathcal A} E_\beta(\tilde\theta)  = \liminf_{n \to \infty}
    E_\beta(\theta_n) < +\infty.     
  \end{align}
  Furthermore, by Lemma \ref{l:sinx} and positivity of $J_\beta$ we
  have $\| \theta_n' \|_{L^2(\R^+)} \leq C$ for some $C > 0$.
  Therefore, upon extraction of subsequences we have
  $\theta_n' \rightharpoonup \theta'$ in $L^2(\R^+)$ for some
  $\theta \in H^1_{loc}(\R^+)$ and $\theta_n \to \theta$ in
  $C([0, R])$ for any $R > 0$ fixed by Sobolev embedding \cite[Theorem
  8.8 and Proposition 8.13]{brezis}. By a diagonal argument we then
  conclude that upon further extraction of a subsequence we have
  $\theta_n(x) \to \theta(x)$ for every $x > 0$.

  Now, by the lower semicontinuity of the norm we have
  $\liminf_{n \to \infty} \| \theta_n' \|_{L^2(\R^+)} \geq \| \theta'
  \|_{L^2(\R^+)}$.
  Therefore, by Lemma \ref{l:sinx} and Fatou's lemma we get
  $\liminf_{n \to \infty} F_\beta(\theta_n) \geq F_\beta(\theta)$.
  Again, by Fatou's lemma we also get
  $\liminf_{n \to \infty} J_\beta(\theta_n) \geq J_\beta(\theta)$.  In
  fact, since $(\theta_n)$ is a minimizing sequence, the inequalities
  above are equalities. This implies that
  $\theta_n - \beta \to \theta - \beta$ strongly in
  $\mathring{H}^1_0(\R^+)$, and, hence, we have
  $\theta \in \mathcal A$. Thus, $\theta$ is a minimizer of $E_\beta$
  over $\mathcal A$.

  Finally, observe that by weak chain rule \cite[Corollary
  8.11]{brezis} and Lemma \ref{l:sinx} we have
  $\sin \theta \in H^1(\R^+)$.  Therefore, by \cite[Corollary
  8.9]{brezis} we also have
  $\sin \theta \in C\big(\overline{\R^+} \big)$ and
  $\sin \theta(x) \to 0$ as $x \to +\infty$. On the other hand, from
  the Modica-Mortola type inequality and Lemmas \ref{l:Jetab} and
  \ref{l:sinx} we obtain
  \begin{align}
    \label{mm}
    \int_0^\infty |\sin \theta| \, |\theta'| \, dx \leq \frac12 
    \int_0^\infty \brackets{|\theta'|^2 + \sin^2\theta} d x < +\infty,
  \end{align}
  which implies that $\theta(x) \to \theta_\infty$ for some
  $\theta_\infty \in \pi \mathbb Z$ as $x \to +\infty$. This concludes
  the proof.
\end{proof}

\begin{remark}
  We have shown existence of a minimizer for the energy in \eqref{Eb}
  since a priori the energy in \eqref{Eb0} did not make sense for all
  $\theta \in \mathcal A$. However, if $\theta \in \mathcal A$
  satisfies $E_\beta(\theta) <C$ (in the sense of \eqref{Eb}) then it
  is not difficult to see that the energy in \eqref{Eb0} also makes
  sense and coincides with that in \eqref{Eb}.
\end{remark}

\section{Proof of Theorem \ref{t:regular}}
\label{sec:proof-theor-reft1}

We now proceed to the proof of Theorem \ref{t:regular}, where we have
to compute a variation of the energy in \eqref{Eb}. It is clear how to
deal with all the terms except $J_\beta(\theta)$. In order to find the
variation of $J_\beta(\theta)$, for notational convenience we define
\begin{align}
  \label{u}
  u(x) := \sin (\theta(x) - \beta),
\end{align}
for $\theta \in \mathcal A$ and notice that
\begin{align}
  \label{Jb2}
  \int_0^\infty \int_0^\infty {(u(x) - u(y))^2 \over
  (x - y)^2} \, dx \, dy =   J_\beta(\theta).
\end{align}
We are taking a variation with respect to $\theta$ as follows:
$\theta(x) \mapsto \theta(x) + \eps \phi(x)$, with
$\phi \in C^\infty_c(\R^+)$ and $\eps \in \R$.  It is convenient to
introduce the corresponding variation in $u$, defined as
$u(x) \mapsto u(x) + \eps \psi_\eps(x)$, where
\begin{align}
  \label{psi}
  \psi_\eps(x) := 
  \begin{cases}
    {\sin (\theta(x) + \eps \phi(x) - \beta) - \sin (\theta(x) -
      \beta) \over \eps} & \eps \not= 0, \\
    \cos (\theta(x) - \beta) \phi(x) & \eps = 0.
  \end{cases}
\end{align}
Note that for every 
$x \in \R^+$ we have
\begin{align}
  \label{psi0}
  \psi_\eps(x) \to \psi_0(x) = : \psi(x) \
  \text{as} \ \eps 
  \to 0. 
\end{align}

Before computing the variation of $J_\beta(\theta)$ we will need two
technical lemmas concerning the properties of $\psi_\eps$.

\begin{lemma}
  \label{l:trig}
  Let $\eps \in \R$, $\theta \in \mathcal A$,
  $\phi \in C^\infty_c(\R^+)$, and let $\psi_\eps$ be defined in
  \eqref{psi}. Then $\psi_\eps \in H^1(\R^+)$, and for almost every
  $x \in \R^+$ we have
  \begin{align}
    \label{phitr}
    |\psi_\eps(x)| \leq |\phi(x)| \qquad \text{and} \qquad
    |\psi_\eps'(x)| \leq |\phi'(x)| + |\theta'(x)| \, |\phi(x)|.
  \end{align}
\end{lemma}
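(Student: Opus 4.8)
The plan is to treat the cases $\eps \ne 0$ and $\eps = 0$ in parallel, exploiting that $\psi_\eps$ inherits the compact support of $\phi$. Indeed, wherever $\phi(x) = 0$ one has $\theta(x) + \eps \phi(x) - \beta = \theta(x) - \beta$, so the numerator in \eqref{psi} vanishes (and likewise $\psi_0 = \cos(\theta - \beta)\phi$ vanishes there); hence $\psi_\eps$ is supported in $\mathrm{supp}\,\phi$, a compact subset of $\R^+$. Consequently it suffices to establish the two pointwise bounds and the membership in $H^1$ on a fixed bounded interval $I \supset \mathrm{supp}\,\phi$, on which $\theta - \beta$ is a genuine $H^1(I)$ function (being continuous with $L^2$ derivative).

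First I would prove the bound $|\psi_\eps| \le |\phi|$. For $\eps \ne 0$ I use the sum-to-product identity
$$\sin(\theta + \eps\phi - \beta) - \sin(\theta - \beta) = 2 \cos\!\left(\theta - \beta + \tfrac{\eps\phi}{2}\right) \sin\!\left(\tfrac{\eps\phi}{2}\right),$$
so that $\psi_\eps = \cos(\theta - \beta + \tfrac{\eps\phi}{2}) \cdot \frac{\sin(\eps\phi/2)}{\eps/2}$; the bound follows from $|\cos| \le 1$ and $|\sin t| \le |t|$. The case $\eps = 0$ is immediate from $|\cos| \le 1$.

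Next, for the derivative bound, I would apply the weak chain rule \cite[Corollary 8.11]{brezis} on $I$ to the Lipschitz function $\sin$ composed with $\theta - \beta \in H^1(I)$ and with $\theta + \eps\phi - \beta \in H^1(I)$, obtaining that both $u = \sin(\theta - \beta)$ and $v_\eps = \sin(\theta + \eps\phi - \beta)$ lie in $H^1(I)$, with $u' = \cos(\theta - \beta)\theta'$ and $v_\eps' = \cos(\theta + \eps\phi - \beta)(\theta' + \eps\phi')$. By linearity of weak differentiation, for $\eps \ne 0$ we have $\psi_\eps' = (v_\eps' - u')/\eps$, which I split as
$$\psi_\eps' = \frac{\cos(\theta + \eps\phi - \beta) - \cos(\theta - \beta)}{\eps}\,\theta' + \cos(\theta + \eps\phi - \beta)\,\phi'.$$
Applying the analogous sum-to-product identity to the cosine difference shows $\left| \frac{\cos(\theta + \eps\phi - \beta) - \cos(\theta - \beta)}{\eps} \right| \le |\phi|$, and together with $|\cos| \le 1$ this yields $|\psi_\eps'| \le |\theta'|\,|\phi| + |\phi'|$. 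The case $\eps = 0$ follows by directly differentiating $\cos(\theta - \beta)\phi$.

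Finally, since $\phi, \phi' \in C^\infty_c(\R^+)$ are bounded and $\theta' \in L^2(\R^+)$, the right-hand side $|\phi'| + |\theta'|\,|\phi|$ lies in $L^2(\R^+)$; combined with $|\psi_\eps| \le |\phi| \in L^2$ this gives $\psi_\eps \in H^1(\R^+)$. The only point requiring genuine care is the justification of the chain rule and the identification of $(v_\eps' - u')/\eps$ as a bona fide weak derivative of $\psi_\eps$; this is where I expect the main (if modest) technical work to lie, and the compact support of $\psi_\eps$ is precisely what reduces everything to the bounded interval $I$ on which $\theta - \beta$ is $H^1$.
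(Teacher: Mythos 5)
Your proposal is correct and follows essentially the same route as the paper: both rest on the weak chain rule \cite[Corollary 8.11]{brezis} to get the formula for $\psi_\eps'$, use the compact support inherited from $\phi$ to upgrade local membership to $\psi_\eps \in H^1(\R^+)$, and then derive the two pointwise bounds by elementary means. The only difference is cosmetic: where you bound the difference quotients $\frac{\sin(\theta+\eps\phi-\beta)-\sin(\theta-\beta)}{\eps}$ and $\frac{\cos(\theta+\eps\phi-\beta)-\cos(\theta-\beta)}{\eps}$ via sum-to-product identities and $|\sin t|\leq |t|$, the paper invokes the mean value theorem to write $\psi_\eps = \cos(\theta + \eps\lambda_\eps\phi - \beta)\,\phi$ with $\lambda_\eps(x) \in (0,1)$.
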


\begin{proof}
  By mean value theorem, we have
  \begin{align}
    \psi_\eps(x) = \cos(\theta(x) + \eps \lambda_\eps(x) \phi(x) -
    \beta) \phi(x),
  \end{align}
  for some $\lambda_\eps(x) \in (0,1)$, which yields the first
  inequality in \eqref{phitr}.  Next, applying the weak chain rule
  \cite[Corollary 8.11]{brezis}, we obtain
  $\psi_\eps\in H^1_{loc}(\R^+)$, and for almost every $x \in \R^+$ we
  have
\begin{align}
  \psi_\eps'(x) = \cos (\theta(x) + \eps \phi(x) - \beta)
  \phi'(x) +  {\cos(\theta(x) + \eps \phi(x) - \beta) -
  \cos(\theta(x) - \beta) \over \eps} \, \theta'(x).
\end{align}
In particular, $\psi_\eps'$ has compact support and, hence,
$\psi_\eps \in H^1(\R^+)$. Therefore, again by mean value theorem and
triangle inequality we have for some $\lambda_\eps(x) \in (0, 1)$
\begin{align}
   |\psi_\eps'(x)| \leq |\cos (\theta(x) + \eps \phi(x) - \beta)| \,
  |\phi'(x)| + |\sin (\theta(x) + \eps \lambda_\eps(x) \phi(x) -
  \beta) | \, |\phi(x)| \, |\theta'(x)|,
\end{align}
yielding the result.
\end{proof}

\begin{lemma}
  \label{l:h12}
  Let $\eps \in \R$, $\theta \in \mathcal A$,
  $\phi \in C^\infty_c(\R^+)$, and let $\psi_\eps$ be defined in
  \eqref{psi}. Then there exists $C > 0$ independent of $\eps$ such
  that for every $\delta > 0$ there holds
  \begin{align}
    \label{psiepsh12}
    \iint_{\{ |x - y| \leq \delta \} }  {(\psi_\eps(x) - \psi_\eps(y))^2 \over
    (x - y)^2} \, dx \, dy \leq C \delta \quad \text{and} \quad
    \iint_{ \{ |x - y|  \geq \delta \} }  {(\psi_\eps(x) - \psi_\eps(y))^2 \over 
    (x - y)^2} \, dx \, dy \leq C  \delta^{-1}.
  \end{align}
  In particular
  \begin{align}
    \label{psiepsh3}
    \int_0^\infty \int_0^\infty {(\psi_\eps(x) - \psi_\eps(y))^2 \over
    (x - y)^2} \, dx \, dy \leq 2 C.
  \end{align}
\end{lemma}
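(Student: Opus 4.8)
The plan is to reduce everything to a uniform-in-$\eps$ bound on the $H^1(\R^+)$ norm of $\psi_\eps$, and then invoke the standard splitting of the Gagliardo seminorm into its near- and far-diagonal contributions. Lemma \ref{l:trig} provides the pointwise bounds $|\psi_\eps| \leq |\phi|$ and $|\psi_\eps'| \leq |\phi'| + |\theta'|\,|\phi|$ almost everywhere. Since $\phi \in C^\infty_c(\R^+)$ is bounded with compact support and $\theta' \in L^2(\R^+)$ (as $\theta \in \mathcal A$), integrating these inequalities yields $\|\psi_\eps\|_{L^2(\R^+)} \leq \|\phi\|_{L^2(\R^+)}$ and $\|\psi_\eps'\|_{L^2(\R^+)}^2 \leq 2\|\phi'\|_{L^2(\R^+)}^2 + 2\|\phi\|_{L^\infty(\R^+)}^2 \|\theta'\|_{L^2(\R^+)}^2$, both finite and independent of $\eps$. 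This is the only place where the uniformity in $\eps$ enters, and it is precisely what Lemma \ref{l:trig} was designed to deliver; the remaining two estimates are then purely a matter of controlling the seminorm by the $L^2$ norms of $\psi_\eps$ and $\psi_\eps'$.

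For the near-diagonal bound I would write, for $h > 0$, $\psi_\eps(x+h) - \psi_\eps(x) = \int_0^h \psi_\eps'(x+s)\,ds$, so that Cauchy--Schwarz gives $(\psi_\eps(x+h)-\psi_\eps(x))^2 \leq h \int_0^h |\psi_\eps'(x+s)|^2\,ds$. Integrating this in $x$ over $\R^+$ (using $\int_{\R^+}|\psi_\eps'(x+s)|^2\,dx \leq \|\psi_\eps'\|_{L^2(\R^+)}^2$ for each $s > 0$) gives $\int_{\R^+}(\psi_\eps(x+h)-\psi_\eps(x))^2\,dx \leq h^2 \|\psi_\eps'\|_{L^2(\R^+)}^2$; dividing by $h^2$ and integrating in $h$ over $(0,\delta)$, then doubling to account for $h < 0$, produces
\[
  \iint_{\{|x-y|\leq\delta\}} \frac{(\psi_\eps(x)-\psi_\eps(y))^2}{(x-y)^2}\,dx\,dy \leq 2\delta\,\|\psi_\eps'\|_{L^2(\R^+)}^2,
\]
which is the first claimed inequality with a constant independent of $\eps$ and $\delta$.

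For the far-diagonal bound I would use $(\psi_\eps(x)-\psi_\eps(y))^2 \leq 2\psi_\eps(x)^2 + 2\psi_\eps(y)^2$ together with the symmetry of the integrand, reducing matters to $\iint_{\{|x-y|\geq\delta\}} \psi_\eps(x)^2 (x-y)^{-2}\,dx\,dy$; carrying out the inner integral over $\{|x-y|\geq\delta\}$ contributes a factor $2/\delta$, so the whole expression is bounded by $(C/\delta)\|\psi_\eps\|_{L^2(\R^+)}^2$, again uniformly in $\eps$. Choosing $C$ to be the larger of the two resulting constants establishes both inequalities in \eqref{psiepsh12}, and the ``in particular'' statement \eqref{psiepsh3} then follows immediately by taking $\delta = 1$ and summing the two bounds. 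I do not anticipate any genuine obstacle: the argument is a routine interpolation estimate, and the single point requiring attention is that every constant be free of $\eps$, which is guaranteed by the $\eps$-independent right-hand sides supplied by Lemma \ref{l:trig}.
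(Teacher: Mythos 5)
Your proposal is correct and follows essentially the same route as the paper's proof: both reduce the problem to $\eps$-uniform $H^1$ bounds on $\psi_\eps$ supplied by Lemma \ref{l:trig}, control the near-diagonal part of the seminorm by writing the difference quotient as an average of $\psi_\eps'$ and applying Cauchy--Schwarz (yielding the bound $2\delta\|\psi_\eps'\|_{L^2}^2$), and control the far-diagonal part by the elementary bound $(\psi_\eps(x)-\psi_\eps(y))^2 \leq 2\psi_\eps(x)^2+2\psi_\eps(y)^2$ together with symmetry and the explicit inner integral, followed by taking $\delta=1$ for \eqref{psiepsh3}. The only difference is cosmetic (you parametrize by $y=x+h$ and apply Cauchy--Schwarz on $\int_0^h$, while the paper uses the representation $\int_0^1\psi_\eps'(y+t(x-y))\,dt$ and a change of variables), so the argument is sound as written.
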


\begin{proof}
  First of all, since $\psi_\eps$ has compact support lying in $\R^+$,
  we can extend $\psi_\eps$ by zero to the whole real line. By Lemma
  \ref{l:trig}, $\psi_\eps$ is absolutely continuous and, hence, for
  all $x \not= y$ we have
  \begin{align}
    {\psi_\eps(x) - \psi_\eps(y)  \over x - y} = \int_0^1
    \psi_\eps'(y + t (x - y)) dt. 
  \end{align}
  Therefore
  \begin{align}
    & \iint_{ \{ |x - y| \leq \delta \} }  {(\psi_\eps(x) - \psi_\eps(y))^2 \over
      (x - y)^2} \, dx \, dy \notag \\
    & \hspace{2cm} = \iint_{ \{ |x - y| \leq \delta \} }
      \int_0^1 \int_0^1 \psi_\eps'(y + t (x - y)) \psi_\eps'(y + s (x -
      y)) \, dt \, ds \, dx \, dy.
      \label{iintpsi}
  \end{align}
  Interchanging the order of integration and applying Cauchy-Schwarz
  inequality, from \eqref{iintpsi} we obtain
  \begin{align}
    \iint_{ \{ |x - y| \leq \delta \} }  {(\psi_\eps(x) - \psi_\eps(y))^2 \over
    (x - y)^2} \, dx \, dy \leq \int_0^1 \iint_{ \{ |x - y|
    \leq \delta \} } |\psi_\eps'(y + s (x - y))|^2 \, dx \, dy \, ds. 
  \end{align}
  Finally, using the new variable $z = x - y$ in place of $x$ yields
  \begin{align}
    \iint_{ \{ |x - y| \leq \delta \} }  {(\psi_\eps(x) - \psi_\eps(y))^2 \over
    (x - y)^2} \, dx \, dy\leq  \int_0^1 \int_{- \delta}^{+ \delta}
    \int_{-\infty}^\infty |\psi_\eps'(y + s z)|^2 \, dy  \, dz \, ds 
    = 2 \delta \int_0^\infty |\psi_\eps'(y)|^2 \, dy,
  \end{align}
  which in view of Lemma \ref{l:trig} gives the first estimate in
  \eqref{psiepsh12}.

  To obtain the second estimate in \eqref{psiepsh12}, simply note that
  \begin{multline}
    \iint_{ \{ |x - y| \geq \delta \} } {(\psi_\eps(x) -
      \psi_\eps(y))^2 \over (x - y)^2} \, dx \, dy \leq 2 \iint_{ \{
      |x - y| \geq \delta \} } {|\psi_\eps(x)|^2 + |\psi_\eps(y)|^2
      \over
      (x - y)^2} \, dx \, dy \\
    = 4 \iint_{ \{ |x - y| \geq \delta \} } {|\psi_\eps(y)|^2 \over (x
      - y)^2} \, dx \, dy = {8 \over \delta} \int_0^\infty
    |\psi_\eps(y)|^2 dy,
  \end{multline}
  yielding the claim, once again, by Lemma \ref{l:trig}.

  Lastly, \eqref{psiepsh3} is an immediate corollary to
  \eqref{psiepsh12} with $\delta = 1$.
\end{proof}

We now establish G\^ateaux differentiability of $J_\beta(\theta)$ with
respect to compactly supported smooth perturbations of $\theta$.

\begin{lemma}
  \label{l:gateaux}
  Let $\theta \in \mathcal A$ be such that $J_\beta(\theta) < \infty$,
  let $\phi \in C^\infty_c(\R^+)$, and let $u$ and $\psi$ be defined
  in \eqref{u} and \eqref{psi0}, respectively. Then
  \begin{align}
    \lim_{\eps \to 0} {J_\beta(\theta + \eps \phi) - J_\beta(\theta)
    \over \eps} = 2 \int_0^\infty \int_0^\infty {(u(x) - u(y))
    (\psi(x) - \psi(y)) \over (x - y)^2} \, dx \, dy.
  \end{align}
\end{lemma}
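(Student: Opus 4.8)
The plan is to exploit the bilinear structure of $J_\beta$. Writing
\[
  B(f,g) := \int_0^\infty \int_0^\infty \frac{(f(x) - f(y))(g(x) - g(y))}{(x - y)^2} \, dx \, dy
\]
for the symmetric bilinear form associated with the Gagliardo seminorm, so that $J_\beta(\theta) = B(u,u)$ by \eqref{Jb2}, the first step is to record the exact algebraic identity coming from the definition \eqref{psi}: since $\sin(\theta(x) + \eps\phi(x) - \beta) = u(x) + \eps\psi_\eps(x)$, expanding the square in $J_\beta(\theta + \eps\phi) = B(u + \eps\psi_\eps, u + \eps\psi_\eps)$ and subtracting $B(u,u)$ yields, after dividing by $\eps$,
\[
  \frac{J_\beta(\theta + \eps\phi) - J_\beta(\theta)}{\eps} = 2\, B(u, \psi_\eps) + \eps\, B(\psi_\eps, \psi_\eps).
\]
All three forms are finite: $B(u,u) = J_\beta(\theta) < \infty$ by hypothesis, $B(\psi_\eps, \psi_\eps)$ is bounded uniformly in $\eps$ by \eqref{psiepsh3}, and the cross term is finite by the Cauchy--Schwarz inequality for $B$; this legitimizes the term-by-term splitting of the expanded square.

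Given this identity, the conclusion will follow once I show $\eps\, B(\psi_\eps, \psi_\eps) \to 0$ and $B(u, \psi_\eps) \to B(u, \psi)$ as $\eps \to 0$. The first limit is immediate from the uniform bound \eqref{psiepsh3}. For the second, the natural route is the Cauchy--Schwarz inequality for $B$, giving $|B(u, \psi_\eps) - B(u, \psi)| = |B(u, \psi_\eps - \psi)| \leq B(u,u)^{1/2} \, B(\psi_\eps - \psi, \psi_\eps - \psi)^{1/2}$, so that everything reduces to the strong convergence $B(\psi_\eps - \psi, \psi_\eps - \psi) \to 0$, i.e. convergence of $\psi_\eps$ to $\psi$ in the seminorm. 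Here I first note that $\psi \in H^1(\R^+)$ with compact support, so $B(\psi, \psi) < \infty$ and the pairing is meaningful; this is clear since $\psi = \cos(\theta - \beta)\phi$ with $\phi \in C^\infty_c(\R^+)$ and $\theta - \beta \in \mathring{H}^1_0(\R^+)$.

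The main obstacle is precisely this strong convergence of $\psi_\eps$, which cannot be read off from the pointwise convergence \eqref{psi0} alone. The plan is to obtain it by combining $L^2$ convergence of both $\psi_\eps$ and its derivative with the two-scale estimate of Lemma \ref{l:h12}. Differentiating \eqref{psi} and letting $\eps \to 0$ gives $\psi_\eps' \to \psi'$ pointwise a.e., while Lemma \ref{l:trig} furnishes the dominations $|\psi_\eps - \psi| \leq 2|\phi|$ and $|\psi_\eps' - \psi'| \leq 2(|\phi'| + |\theta'|\,|\phi|)$, whose right-hand sides lie in $L^2(\R^+)$ because $\phi$ is compactly supported and $\theta' \in L^2(\R^+)$. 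Dominated convergence then yields $\|\psi_\eps - \psi\|_{L^2(\R^+)} \to 0$ and $\|\psi_\eps' - \psi'\|_{L^2(\R^+)} \to 0$.

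To finish, I apply the estimates behind Lemma \ref{l:h12} to $w_\eps := \psi_\eps - \psi$, which is again absolutely continuous, compactly supported and in $L^2$; splitting the domain of integration at $|x - y| = 1$ bounds $B(w_\eps, w_\eps)$ by $2\|w_\eps'\|_{L^2(\R^+)}^2 + 8\|w_\eps\|_{L^2(\R^+)}^2$, and both terms tend to $0$ by the previous paragraph. This gives $B(u, \psi_\eps) \to B(u, \psi)$ and hence the claimed formula. The only point requiring care is that the two-scale estimate must be applied to the difference $w_\eps$, not to $\psi_\eps$ directly; this is legitimate since the proof of Lemma \ref{l:h12} uses only that its argument is an absolutely continuous, compactly supported $L^2$ function.
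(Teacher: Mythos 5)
Your proof is correct, but it takes a genuinely different route from the paper in the key limiting step. Both arguments start from the same inevitable algebraic expansion $\bigl(J_\beta(\theta+\eps\phi)-J_\beta(\theta)\bigr)/\eps = 2B(u,\psi_\eps) + \eps B(\psi_\eps,\psi_\eps)$, and both kill the quadratic term with the uniform bound \eqref{psiepsh3}. The difference is in how the cross term is handled. The paper never proves any convergence of $\psi_\eps'$: it splits the double integral at $|x-y|=\delta$, bounds the near-diagonal piece by $C\sqrt{\delta}$ uniformly in $\eps$ (Cauchy--Schwarz plus Lemma \ref{l:h12}), passes to the limit $\eps\to0$ on the far region by dominated convergence using only the pointwise limit \eqref{psi0} and the dominating function built from $|\phi|^2$, and then sends $\delta\to0$ --- a two-parameter argument. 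You instead upgrade the convergence of $\psi_\eps$ itself: pointwise a.e.\ convergence of $\psi_\eps'$ together with the dominations from Lemma \ref{l:trig} give $\psi_\eps\to\psi$ and $\psi_\eps'\to\psi'$ in $L^2(\R^+)$, and the two-scale estimate of Lemma \ref{l:h12} applied to $w_\eps=\psi_\eps-\psi$ (legitimately, as you note, since its proof uses only absolute continuity, compact support and $L^2$ bounds) converts this into $B(w_\eps,w_\eps)\to0$; Cauchy--Schwarz for the positive semidefinite form $B$ then finishes in one line. What your route buys is a stronger intermediate statement (strong convergence of $\psi_\eps$ in the Gagliardo seminorm) and no interchange of limits; the cost is the extra differentiation step, where you should be slightly careful that the a.e.\ formula for $\psi_\eps'$ holds off an $\eps$-dependent null set --- working along an arbitrary sequence $\eps_n\to0$ and taking the countable union of null sets disposes of this. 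The paper's route is more economical in hypotheses (it never needs convergence of derivatives), while yours is more structural and quantitative.
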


\begin{proof}
  Observe that, using $\psi_\eps$ defined in \eqref{psi}, we can write
  \begin{align}
    {J_\beta(\theta + \eps \phi) - J_\beta(\theta)
    \over \eps} = 2 \int_0^\infty \int_0^\infty {(u(x) - u(y))
    (\psi_\eps(x) - \psi_\eps(y)) \over (x - y)^2} \, dx \, dy \notag 
    \\
    + \eps \int_0^\infty \int_0^\infty {  (\psi_\eps(x) - \psi_\eps(y))^2
    \over (x - y)^2} \, dx \, dy.  
  \end{align}
  Next, for $\delta \in (0, 1)$ we split the integrals above into
  those over $\{ |x - y| \leq \delta\}$ and those over
  $\{ |x - y| > \delta\}$.  Since $J_\beta(\theta) < \infty$, by
  Cauchy-Schwarz inequality and Lemma \ref{l:h12} the former are
  bounded by $C \sqrt{\delta}$ with $C \geq 0$ independent of
  $\eps \in (-1,1) \backslash\{0\}$. To compute the latter, we use
  Lebesgue dominated convergence theorem to pass to the limit as
  $\eps \to 0$ with $\delta$ fixed. Observe that since
  \begin{align}
    \label{sqdom}
    2 \iint_{\{ |x - y| > \delta \} }  {|u(x) - u(y)| \, 
    |\psi_\eps(x) - \psi_\eps(y)| \over (x - y)^2} \, dx \, dy \leq
    \iint_{\{ |x - y| > \delta \} }  {(u(x) - u(y))^2 \over (x - y)^2} \,
    dx \, dy \notag \\
    + \iint_{\{ |x - y| > \delta \} }  {
    (\psi_\eps(x) - \psi_\eps(y))^2 \over (x - y)^2} \, dx \, dy,
  \end{align}
  and since by our assumption and \eqref{Jb2} the first integral is
  bounded, it is sufficient to dominate the integrand in the second
  term of the right-hand-side of \eqref{sqdom} by an integrable
  function independent of $\eps$.

  Using Lemma \ref{l:trig}, we can write for all $|x - y| > \delta$:
  \begin{align}
    { (\psi_\eps(x) - \psi_\eps(y))^2 \over (x - y)^2} 
    \leq  {  (|\psi_\eps(x)| + |\psi_\eps(y)|)^2 \over (x - y)^2} \leq
    2 \,  {|\phi(x)|^2 + |\phi(y)|^2 \over (x - y)^2} \chi_{\{ |x - y|
    > \delta\} }(x, y) =: G_\delta(x, y),
  \end{align}
  where $\chi_{\{ |x - y| > \delta\} }$ is the characteristic function
  of the set $\{ |x - y| > \delta\}$. Since $\phi$ is bounded and has
  compact support, we have $G_\delta \in L^1(\R^+ \times \R^+)$.
  Therefore, since $\psi_\eps(x) \to \psi(x)$ as $\eps \to 0$ for all
  $x \in \R^+$, by Lebesgue dominated convergence theorem we have
  \begin{align}
    \lim_{\eps \to 0}  
    & \iint_{\{ |x - y| > \delta \} } {(u(x) - u(y)) 
      (\psi_\eps(x) - \psi_\eps(y)) \over (x - y)^2} \, dx \, dy =
      \iint_{\{ |x - y| > \delta \} }  {(u(x) - u(y)) 
      (\psi(x) - \psi(y)) \over (x - y)^2} \, dx \, dy \\
    \lim_{\eps \to 0} 
    & \iint_{\{ |x - y| > \delta \} }  {
      (\psi_\eps(x) - \psi_\eps(y))^2 \over (x - y)^2} \, dx \, dy =
      \iint_{\{ |x - y| > \delta \} }  { (\psi(x) - \psi(y))^2 \over (x
      - y)^2} \, dx \, dy < \infty.   
  \end{align}
  Lastly, combining this result with the estimates of the integrals
  over $\{ |x - y| \leq \delta\}$ and sending $\delta \to 0$ completes
  the proof, once again, by Lebesgue dominated convergence theorem,
  Lemma \ref{l:h12} and Cauchy-Schwarz inequality.
\end{proof}

With the differentiability of $J_\beta$ established in Lemma
\ref{l:gateaux}, differentiability of $E_\beta$ then follows by a
standard argument. Thus, we arrive at the following result that yields
the Euler-Lagrange equation for a minimizer of $E_\beta$ over
$\mathcal A$ in weak form.

\begin{proposition}
  \label{p:ELw}
  Let $\theta$ be a minimizer of $E_\beta$ over $\mathcal A$. Then
  \begin{align}
    \label{ELw}
    {\nu \over 4 \pi} \int_0^\infty \int_0^\infty {( \sin (\theta(x)
    - \beta) - \sin(\theta(y) - \beta)) (\cos (\theta(x) - \beta)
    \phi(x) - \cos (\theta(y) - \beta) \phi(y)) \over (x - y)^2} \, dx
    \, dy\notag \\ 
    + \int_0^\infty \left( \theta' \phi' + \phi \sin \theta \cos \theta 
    \right) dx + {\nu \over 2 \pi} \int_0^\infty {\phi \sin (\theta -
    \beta) \cos (\theta - \beta) \over x} \, dx = 0 \qquad \forall
    \phi \in C^\infty_c\big(\R^+ \big).   
  \end{align}
\end{proposition}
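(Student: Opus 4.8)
The plan is to derive the weak Euler–Lagrange equation by computing the Gâteaux derivative of $E_\beta$ in the direction $\phi \in C^\infty_c(\R^+)$ and setting it to zero, which is justified because $\theta$ is a minimizer over the affine space $\mathcal A$ and $\theta + \eps\phi \in \mathcal A$ for every $\eps \in \R$ (since adding a compactly supported smooth perturbation preserves both the continuity and the homogeneous Sobolev membership $\theta - \beta \in \mathring H^1_0(\R^+)$). First I would recall the decomposition $E_\beta(\theta) = F_\beta(\theta) + \frac{\nu}{8\pi} J_\beta(\theta) - \frac{\nu}{8\pi} J_\beta(\eta_\beta)$ from the proof of Theorem~\ref{t:exist}. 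The last term is independent of $\theta$ and contributes nothing to the variation, so the computation reduces to differentiating $F_\beta$ and $J_\beta$ separately.

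The $F_\beta$ part is the local, classical portion. I would differentiate under the integral sign term by term: the exchange term $\frac12|\theta'|^2$ produces $\int_0^\infty \theta'\phi'\,dx$, the anisotropy term $\frac12\sin^2\theta$ produces $\int_0^\infty \phi\sin\theta\cos\theta\,dx$, and the local nonlocal-remainder term $\frac{\nu}{4\pi}\cdot\frac{\sin^2(\theta-\beta)-\sin^2(\eta_\beta-\beta)}{x}$ produces $\frac{\nu}{2\pi}\int_0^\infty \frac{\phi\sin(\theta-\beta)\cos(\theta-\beta)}{x}\,dx$ (the $\eta_\beta$ piece drops out under differentiation in $\theta$). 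To make differentiation under the integral rigorous, I would note that $\phi$ has compact support bounded away from $x=0$, so each difference quotient is dominated uniformly in $\eps$ by an integrable function on that compact support; the singular factor $1/x$ is harmless there since $\mathrm{supp}\,\phi \subset\subset \R^+$, and the convergence follows by dominated convergence using the smoothness of the integrands in $\theta$. The factor $\frac{\nu}{8\pi}J_\beta(\theta)$ is handled directly by Lemma~\ref{l:gateaux}, whose conclusion gives exactly $\frac{\nu}{4\pi}\int_0^\infty\int_0^\infty \frac{(u(x)-u(y))(\psi(x)-\psi(y))}{(x-y)^2}\,dx\,dy$ with $u = \sin(\theta-\beta)$ and $\psi = \cos(\theta-\beta)\phi$; substituting these expressions yields the nonlocal double integral appearing in the first line of \eqref{ELw}. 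Note that Lemma~\ref{l:gateaux} requires $J_\beta(\theta) < \infty$, which holds since $\theta$ is a minimizer and hence has finite energy.

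Assembling the three contributions, the vanishing of the full Gâteaux derivative at the minimizer gives precisely \eqref{ELw}. The main obstacle is the rigorous passage to the limit for the nonlocal term, but this has already been isolated and resolved in Lemma~\ref{l:gateaux}, so within this proposition the essential work is to confirm that $E_\beta$ is Gâteaux differentiable as the sum of its pieces and to verify that each limit commutes as claimed. The one point requiring genuine care is the local term with the $1/x$ weight: although $\phi$ is supported away from the origin so there is no singularity issue on its support, I would make explicit that the difference quotient $\frac{\sin^2(\theta+\eps\phi-\beta)-\sin^2(\theta-\beta)}{\eps}$ converges pointwise to $2\sin(\theta-\beta)\cos(\theta-\beta)\phi$ and is dominated, uniformly in $\eps\in(-1,1)$, by a constant multiple of $\chi_{\mathrm{supp}\,\phi}/x$, which is integrable. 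With all three limits justified, summing and setting equal to zero completes the proof.
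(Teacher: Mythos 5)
Your proof is correct and follows essentially the same route as the paper: the paper's own proof of Proposition~\ref{p:ELw} consists precisely of invoking Lemma~\ref{l:gateaux} for the $J_\beta$ term and noting that differentiability of the remaining (local) part of $E_\beta$ follows by a standard argument. You have simply written out in full the ``standard argument'' the paper leaves implicit, including the correct observations that $\theta+\eps\phi\in\mathcal A$, that $J_\beta(\theta)<\infty$ for a minimizer, and that the $1/x$ weight causes no trouble since $\mathrm{supp}\,\phi$ is compactly contained in $\R^+$.
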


Our next goal is to find an alternative representation of the nonlocal
term in \eqref{ELw} that would allow us to proceed with establishing
higher regularity of the minimizers of $E_\beta$, ultimately obtaining
the classical form of the Euler-Lagrange equation in \eqref{EL}.

\begin{lemma}
  \label{l:h2}
  Let $\theta \in \mathcal A$ be such that $J_\beta(\theta) <
  \infty$. Then for every $\phi \in C^\infty_c\big(\R^+ \big)$ we have 
  \begin{align}
    \label{ELs}
    \int_0^\infty \int_0^\infty {( \sin (\theta(x)
    - \beta) - \sin(\theta(y) - \beta)) (\cos (\theta(x) - \beta)
    \phi(x) - \cos (\theta(y) - \beta) \phi(y)) \over (x - y)^2} \, dx
    \, dy\notag \\ 
    + 2 \int_0^\infty {\sin(\theta(x) - \beta) \cos(\theta(x) - \beta) 
    \over x} \, \phi(x) \, dx  \notag \\
    = 2 \int_0^\infty \left( \dashint_0^\infty {\cos (\theta(y) -
    \beta) \theta'(y) \over x - y} \,
    dy \right) \cos (\theta(x) - \beta) \phi(x) \, dx.
  \end{align}
\end{lemma}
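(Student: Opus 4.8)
The plan is to reduce the symmetric double integral in \eqref{ELs} to the principal-value form on the right through a symmetrization-and-integration-by-parts argument, keeping track of the single boundary contribution at $y = 0$ that produces the $u(x)/x$ term. Throughout I write $u(x) = \sin(\theta(x)-\beta)$ as in \eqref{u} and $v(x) = \cos(\theta(x)-\beta)\phi(x)$, so that $v = \psi$ in the notation of \eqref{psi0}; note that $v$ has compact support in $\R^+$ and that $u(0)=0$ because $\theta(0)=\beta$ for $\theta \in \mathcal A$. The left-hand side of \eqref{ELs} then reads $I + 2\int_0^\infty x^{-1} u(x) v(x)\,dx$, where $I$ denotes the double integral with numerator $(u(x)-u(y))(v(x)-v(y))$.

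First I would record that $I$ is absolutely convergent: by Cauchy--Schwarz its integrand is dominated in $L^1(\R^+\times\R^+)$ by the product of $J_\beta(\theta)^{1/2}$ (finite by hypothesis) and the square root of the quantity controlled in Lemma \ref{l:h12} applied to $\psi = v$. This lets me introduce the symmetric truncation $I_\eps$, obtained by restricting the domain of $I$ to $\{|x-y|>\eps\}$, and conclude $I_\eps \to I$ by dominated convergence. On the truncated region the singularity at $y=x$ is absent, so I may expand $v(x)-v(y)$, swap the roles of $x$ and $y$ in the second piece, and arrive at the symmetrized form $I_\eps = 2\int_0^\infty v(x) Q_\eps(x)\,dx$ with $Q_\eps(x) = \int_{\{y>0,\,|x-y|>\eps\}} (x-y)^{-2}(u(x)-u(y))\,dy$.

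Next I would integrate $Q_\eps$ by parts in $y$ over the intervals $(0,x-\eps)$ and $(x+\eps,\infty)$, using $(x-y)^{-2} = \partial_y (x-y)^{-1}$ and $\partial_y(u(x)-u(y)) = -u'(y)$ (legitimate since $u \in H^1_{loc}$ is absolutely continuous). The boundary terms at $y = x\pm\eps$ sum to $-\eps^{-1}(u(x+\eps)+u(x-\eps)-2u(x))$, which tends to $0$ as $\eps \to 0$ at every Lebesgue point of $u'$; the term at $y=\infty$ vanishes because $u$ is bounded; and the term at $y=0$ equals $-(u(x)-u(0))/x = -u(x)/x$ by $u(0)=0$. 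What remains of the integrals is $\int_{\{y>0,\,|x-y|>\eps\}} u'(y)/(x-y)\,dy$, converging to the principal value $\dashint_0^\infty u'(y)/(x-y)\,dy$, which is finite for a.e.\ $x$ since it is a singular integral of $u' \in L^2(\R^+)$, controlled after extending $u'$ by zero to $\R$ by the $L^2$-boundedness of the Hilbert transform. Hence $Q_\eps(x) \to P(x) := \dashint_0^\infty u'(y)/(x-y)\,dy - u(x)/x$ for a.e.\ $x$.

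Finally I would pass to the limit $\eps \to 0$ in $I_\eps = 2\int_0^\infty v(x) Q_\eps(x)\,dx$. Since $v$ is supported on a compact subinterval $[a,b]\subset \R^+$ on which $x$ stays bounded away from $0$ and $\infty$, I would justify interchanging the limit with the $x$-integral by dominated convergence, the required uniform-in-$\eps$ bound on $v\,Q_\eps$ following from the $L^2$ Hilbert-transform estimate together with Lemma \ref{l:trig}; this yields $I = 2\int_0^\infty v(x) P(x)\,dx$. Substituting $P$ and recalling $u v = \sin(\theta-\beta)\cos(\theta-\beta)\phi$ and $u' = \cos(\theta-\beta)\theta'$ moves the $u(x)/x$ contribution to the left and produces exactly \eqref{ELs}. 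I expect the main obstacle to be this last interchange of limit and integration --- securing a dominating function for $v\,Q_\eps$ that is uniform in $\eps$ near the diagonal --- together with the companion fact that the singular boundary contributions at $y = x\pm\eps$ cancel only in the limit and only at Lebesgue points of $u'$, so that $Q_\eps \to P$ is genuinely an a.e.\ statement.
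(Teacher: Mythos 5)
Your proposal is correct in outline but takes a genuinely different route from the paper's proof. The paper extends $u$ and $\psi$ by zero to all of $\R$ and works with the double integral over $\R\times\R$, so that the $u(x)/x$ term arises automatically as the contribution of the negative half-line; it then approximates $\theta-\beta$ in $\mathring{H}^1_0(\R^+)$ by smooth compactly supported functions $w_n$, computes the resulting bilinear forms in Fourier space via Plancherel, passes to the limit using $u_n'\to u'$ in $L^2(\R)$, and only at the very end identifies the multiplier $-i\,\mathrm{sgn}(k)$ with the Hilbert transform as in \eqref{hilb}, so the principal value never has to be produced by hand. You instead stay on the half-line and in physical space: you symmetrize the $\eps$-truncated double integral, integrate by parts in $y$, and read off the $-u(x)/x$ boundary term at $y=0$ and the truncated Hilbert transform of $u'$ directly. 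This is more elementary and makes the origin of both terms in \eqref{ELs} completely transparent, but it shifts the burden onto the final interchange of limit and integration, and there your cited tools are not quite the right ones: Lemma \ref{l:trig} bounds $\psi_\eps$, not your $Q_\eps$, and mere $L^2$-boundedness of the Hilbert transform does not dominate the truncations uniformly in $\eps$. A correct domination needs the maximal truncated Hilbert transform (Cotlar's inequality) for the singular part and the Hardy--Littlewood maximal function for the second difference $\eps^{-1}\left(2u(x)-u(x-\eps)-u(x+\eps)\right)$. Alternatively --- and more cheaply --- you can dispense with pointwise convergence and domination altogether: the truncated Hilbert transforms of $u'\in L^2(\R)$ converge to the Hilbert transform of $u'$ in the $L^2(\R)$ norm, and the second differences converge to $0$ in $L^2(\R)$ by continuity of translations, so pairing $Q_\eps$ against the fixed compactly supported $v\in L^2(\R^+)$ and using Cauchy--Schwarz gives the convergence of $\int_0^\infty v\,Q_\eps\,dx$ directly. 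With either repair your argument is complete and yields exactly \eqref{ELs}.
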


\begin{proof}
  We begin by defining $u$ and $\psi$ as in \eqref{u} and \eqref{psi},
  respectively, and extending them by zero to the whole of $\R$. We
  also similarly extend $\phi$. To simplify the notations, we still
  denote those extensions as $u$, $\psi$ and $\phi$, respectively. 

  Next, we define
  \begin{align}
    \label{I}
    I := \int_{-\infty}^\infty \int_{-\infty}^\infty {( u(x) - 
    u(y)) (\psi(x) - \psi(y))
    \over (x - y)^2} \, dx 
    \, dy,
  \end{align}
  and for $\delta > 0$ we write $I = I_1^\delta + I_2^\delta$, where
  \begin{align}
    I_1^\delta := \iint_ {\{ |x - y| > \delta \} }{ (  u(x) - 
    u(y)) (\psi(x)
     - \psi(y))
    \over (x - y)^2} \, dx 
    \, dy, \label{I1} \\
    I_2^\delta := \iint_ {\{ |x - y| \leq \delta \} } { (   u(x) - 
    u(y)) (\psi(x)
    - \psi(y) )
    \over (x - y)^2} \, dx 
    \, dy. \label{I2}
  \end{align}
  Note that by our assumptions \eqref{I} and, hence, \eqref{I1} and
  \eqref{I2}, define absolutely convergent integrals. Indeed, by Lemma
  \ref{l:trig} and Cauchy-Schwarz inequality we have
  \begin{align}
    |I| 
    & \leq J_\beta^{1/2}(\theta) \left( \int_{-\infty}^\infty
      \int_{-\infty}^\infty {(\psi(x) - 
      \psi(y))^2 \over (x - y)^2} \, dx  \, 
      dy \right)^{1/2} \notag \\ 
    & = J_\beta^{1/2}(\theta) \left( \int_0^\infty
      \int_0^\infty {(\psi(x) - 
      \psi(y))^2 \over (x - y)^2} \, dx  \, 
      dy + 2 \int_0^\infty {|\psi(x)|^2 \over x} \, dx \right)^{1/2},
  \end{align}
  which is finite by Lemma \ref{l:h12}.

  Let now $w_n \in C^\infty_c(\R^+)$ be such that
  $w_n \to \theta - \beta$ in $\mathring{H}^1_0(\R^+)$, and extend
  $w_n$ by zero for $x < 0$. We claim that if $u_n := \sin w_n$, then
  we have $I_n \to I$ as $n \to \infty$, where
  \begin{align}
    I_n := \int_{-\infty}^\infty \int_{-\infty}^\infty {(   u_n(x) - 
     u_n(y)) (\psi(x)
     - \psi(y) )
    \over (x - y)^2} \, dx 
    \, dy.
  \end{align}
  Indeed, define $I^\delta_{1,n}$ and $I^\delta_{2,n}$ as in
  \eqref{I1} and \eqref{I2} with $u$ replaced by $u_n$.  Arguing as in
  the proof of Lemma \ref{l:gateaux}, in view of the pointwise
  convergence of $u_n$ to $u$ we have $I^\delta_{1,n} \to I_1^\delta$.
  At the same time, by the argument in the proof of Lemma \ref{l:h12}
  and boundedness of $u_n'$ in $L^2(\R)$, we also have
  $|I^\delta_{2,n}| \leq C \delta$ for some $C > 0$ independent of
  $n$. Thus, the claim follows by arbitrariness of $\delta$.

  Now, since both $u_n$ and $\psi $ belong to $H^1(\R)$ by Lemma
  \ref{l:trig} and \cite[Corollary 8.10 and Corollary 8.11]{brezis},
  we may proceed by expressing $I_n$ in Fourier space \cite[Theorem
  7.12]{lieb-loss}:
  \begin{align}
    \label{InF}
    I_n = \int_{-\infty}^\infty |k| \mathcal F^*[ 
    u_n] \mathcal F[ \psi ] dk,
  \end{align}
  where ``$*$'' stands for complex-conjugate and
  \begin{align}
    \label{F}
    \mathcal F[u] := \int_{-\infty}^\infty e^{-i k x} u(x) \, dx.
  \end{align}
  Alternatively, we can write \eqref{InF} as 
  \begin{align}
    I_n = \int_{-\infty}^\infty (-i \, \text{sgn} (k))^{*}
    \mathcal F^*[ u_n'] \mathcal F[ \psi ] dk,
  \end{align}
  We now pass to the limit $n \to \infty$ by taking advantage of the
  fact that $u_n' \to u'$ in $L^2(\R)$ and, therefore, we have
  \begin{align}
    \label{hilb}
    I = \int_{-\infty}^\infty (-i \, \text{sgn} (k))^{*}
    \mathcal F^*[ u'] \mathcal F[ \psi ] dk = 2 
    \int_{-\infty}^\infty \left( \dashint_{-\infty}^\infty {
    u'(y) \over x - y} \, dy \right) \psi(x)  \,
    dx,
  \end{align}
  where we inserted the definition of Hilbert transform \cite[Section
  5.1.1]{grafakos}.

  Finally, to obtain the desired formula, we separate out the
  contribution of the negative real axis in the integral over $y$ in
  \eqref{I}. We have
  \begin{align}
    I = \int_0^\infty \int_0^\infty {(  u(x) - 
    u(y)) (\psi(x)
   - \psi(y) )
    \over (x - y)^2} \, dx 
    \, dy + 2 \int_{-\infty}^0 \int_0^\infty { u(x) \psi(x)
    \over (x - y)^2} \, dx 
    \, dy \notag \\
    = \int_0^\infty \int_0^\infty {(  u(x) - 
    u(y)) (\psi(x)
     - \psi(y) )
    \over (x - y)^2} \, dx 
    \, dy + 2 \int_0^\infty {u(x) \psi(x)  \over x} \, dx,
  \end{align}
  where we took into account that since
  $\phi \in C^\infty_c\big( \R^+ \big)$ (see \eqref{psi0} for the
  definition of $\psi$), there is no singularity in the integral near
  $x = 0$. At the same time, from \eqref{hilb} we get
  \begin{align}
    I = 2 \int_0^\infty \left( \dashint_0^\infty {
    u'(y) \over x - y} \, dy \right) \psi(x)  \,
    dx,
  \end{align}
  which concludes the proof.
\end{proof}


\begin{proof}[Proof of Theorem \ref{t:regular}]
  By Proposition \ref{p:ELw}, $\theta$ solves \eqref{ELw}. At the same
  time, by Lemma \ref{l:h2} we have
  \begin{align}
    \label{th2}
    \int_0^\infty \theta \phi'' \, dx = \int_0^\infty g \phi \, dx
    \qquad \forall \phi \in C^\infty_c(\R^+),
  \end{align}
  where 
  \begin{align}
    \label{gH}
    g(x) := \sin \theta(x) \cos \theta(x) + {\nu \over 2 \pi}
    \cos(\theta(x) - \beta) \, \dashint_0^\infty {\cos (\theta(y) -
      \beta) \theta'(y) \over x - y} \, dy.
  \end{align}
  Notice that since $\sin \theta \in L^2(\R^+)$ and
  $\theta' \in L^2(\R^+)$, by the properties of Hilbert transform
  \cite[Theorem 5.1.7 and Theorem 5.1.12]{grafakos} we have that
  $g \in L^2(\R^+)$ as well. Hence $\theta'' \in L^2(\R^+)$, which by
  Sobolev embedding \cite[Theorem 8.8]{brezis} implies that
  $\theta \in C^1(\overline{\R^+})$ and $\theta' \in
  L^\infty(\R^+)$. 

  Focusing now on the nonlocal term, let $u(x)$ be defined by
  \eqref{u}, extended, as usual, by zero to $x < 0$, and let $h(x)$
  denote the integral in \eqref{gH}, with $x \in \R$. Note that by
  chain rule we have $u \in C^1(\overline{\R^+})$, and $u'(x)$
  experiences a jump discontinuity at $x = 0$ whenever
  $u'(0^+) \not= 0$.  Also, by weak chain rule \cite[Corollary
  8.11]{brezis} we have $u'' \in L^2(\R^+)$. Passing to the Fourier
  space as in the proof of Lemma \ref{l:h2} (but treating $h$ as a
  member of $\mathcal S'(\R)$ now), for any
  $\phi \in C^\infty_c(\R^+)$, extended by zero to $x < 0$, we can
  write
  \begin{multline}
    -\int_{-\infty}^\infty h \phi' \, dx = \frac12
    \int_{-\infty}^\infty |k| \mathcal F^*[\phi] \mathcal F[u'] \, dk
    = \frac12 \int_{-\infty}^\infty (-i \, \text{sgn} (k)) \mathcal
    F^*[\phi] \mathcal F[u''] \,
    dk   \\
    + \frac{u'(0^+)}{2} \int_{-\infty}^\infty (-i \, \text{sgn} (k))
    \mathcal F^*[\phi] \, dk = \int_0^\infty \left( \,
      \dashint_0^\infty {u''(y) \over x - y} \, dy \right) \phi(x) \,
    dx + u'(0^+) \int_0^\infty {\phi(x) \over x} \, dx,
  \end{multline}
  where by $u''$ we mean the absolutely continuous part of the 
  distributional derivative of $u'$ on $\R$. Thus, we have
  \begin{align}
    h'(x) =  \dashint_0^\infty {u''(y) \over x - y} \, dy +
    {u'(0) \over x} \qquad \text{in} \quad \mathcal D'(\R^+). 
  \end{align}
  In this expression, the first term is still in $L^2(\R)$. Similarly,
  the second term is in $L^2_{loc}(\R^+)$. By \eqref{th2} and weak
  product and chain rules \cite[Corollary 8.9 and Corollary
  8.11]{brezis}, this then yields that
  $\theta''' \in L^2_{loc}(\R^+)$, implying, in particular, that
  $\theta \in C^2(\R^+)$. Furthermore, by \eqref{th2} we have
  \begin{align}
    \label{th3}
    \theta''(x) = g(x) \qquad \forall x > 0.
  \end{align}
  Finally, again by passing to Fourier space \cite[Section
  3]{dinezza12} we have
  \begin{align}
    \dashint_0^\infty {u'(y) \over x - y} \, dy = \dashint_{-\infty}^\infty
    {u(x) - u(y) \over (x - y)^2} \, dy = \dashint_0^\infty
    {u(x) - u(y) \over (x - y)^2} \, dy + {u(x) \over x},
  \end{align}
  where we took into account that $u(0) = 0$.  Substituting this
  expression to \eqref{th3} yields \eqref{EL}.

  To prove the remaining statements about the behavior of $\theta(x)$
  near $x = 0$, we multiply \eqref{th3} by $\theta'(x)$ and integrate
  over $\R^+$. Since the Hilbert transform is an anti-hermitian
  operator from $L^2(\R)$ to $L^2(\R)$ \cite[Section 5.1.1]{grafakos},
  and since $\cos (\theta(x) - \beta) \theta'(x)$, extended by zero
  for $x < 0$, belongs to $L^2(\R)$, the contribution of the last term
  in the right-hand side of \eqref{gH} vanishes. At the same time,
  since $\theta' \in H^1(\R^+)$ we have $\theta'(x) \to 0$ as
  $x \to +\infty$ \cite[Corollary 8.9]{brezis}. Since also
  $\sin \theta(x) \to 0$ as $x \to +\infty$, by \cite[Theorem
  8.2]{brezis} we have
  \begin{align}
    |\theta'(0)|^2 = \sin^2 \theta(0).
  \end{align}
  The boundary condition then implies that
  $ |\theta'(0)| = \sin \beta$. In particular, $\theta'(0) \not= 0$.
  Thus, the function $u'(x)$ defined above experiences a jump
  discontinuity at $x = 0$, leading to a logarithmic divergence of the
  integral in \eqref{gH} as $x \to 0^+$. This concludes the proof.
\end{proof}

\section{Proof of Theorem~\ref{t:gammanu}}
\label{sec:proof-theor-gammanu}

With a slight abuse of notation we denote the energy in \eqref{Eb} by
$E_\beta^\nu$. We first show that
$\theta_\nu - \beta \to \theta_0 -\beta$ in $\mathring{H}^1_0(\R^+)$
as $\nu \to 0$.  Let us assume that $E_\beta^\nu (\theta_\nu) \leq C$
and $0<\nu<1$. Using the same arguments as in Lemma~\ref{l:sinx}, we
obtain
\begin{align}
  \frac14 \sin^2 \theta(x) + {\nu \over 4 \pi} \cdot {\sin^2
  (\theta(x) - \beta) - \sin^2 (\eta_\beta(x) - \beta) \over x}
  \geq - {C \over 1 + x^2} \qquad \forall x > 0,
\end{align}
where $C > 0$ depends only on $\beta$ and $\eta_\beta$. Therefore we
have
\begin{equation}
E_\beta^0 (\theta_\nu) = \frac12 \int_0^\infty \left( |\theta_\nu'|^2 +
  \sin^2 \theta_\nu \right) dx \leq C,
\end{equation}
which immediately implies (see the proof of Theorem~\ref{t:exist})
that $\theta_\nu - \beta \rightharpoonup \theta -\beta $ in
$\mathring{H}^1_0(\R^+)$ and $\theta \in \mathcal A$.

Now we prove $\Gamma$-convergence of energies with respect to the weak
convergence in $\mathring{H}^1_0(\R^+)$ (for a general introduction to
$\Gamma$-convergence, see, e.g., \cite{braides}). Let us assume that
$\nu_n \to 0$ and $\theta^n - \beta \rightharpoonup \theta -\beta$ in
$\mathring{H}^1_0(\R^+)$. Then by Sobolev embedding \cite[Theorem
8.8]{brezis}, upon extraction of a subsequence we also have
$\theta^n(x) \to \theta(x)$ for all $x > 0$. Therefore, by lower
semicontinuity of the norm, Fatou's lemma and positivity of $J_\beta$
we have
\begin{equation}
  \liminf_{n \to \infty} E_\beta^{\nu_n} (\theta^n) \geq
  E_\beta^0 (\theta). 
\end{equation}
Now, taking any $\theta - \beta \in \mathring{H}^1_0(\R^+)$ such that
$E_\beta^{\nu_n}(\theta) < +\infty$, we can construct a sequence
$\theta^n \equiv \theta$ that trivially satisfies
\begin{equation}
  \limsup_{n \to \infty} E_\beta^{\nu_n} (\theta^n) =
  E_\beta^0 (\theta), 
\end{equation}
establishing the $\Gamma$-limit sought.

Using the properties of $\Gamma$-convergence \cite{braides}, we then
have
$\lim_{\nu \to 0} E_\beta^\nu (\theta_\nu) = E_\beta^0 (\theta_0)$,
where $\theta_0$ is given by the right-hand side of \eqref{nostray},
which implies $\theta_\nu - \beta \to \theta_0 -\beta$ in
$\mathring{H}^1_0(\R^+)$ and $\sin\theta_\nu \to \sin\theta_0$ in
$H^1(\R^+) \cap C(\overline{\R^+})$. However, from the Modica-Mortola
type inequality in \eqref{mm} we also have for some $C>0$ depending
only on $\beta$ and $\eta_\beta$:
\begin{align}
  \int_0^\infty |\sin\theta_\nu||\theta'|\, dx \leq
  E_\beta^\nu(\theta_\nu) + \frac{C}{2} \nu \leq E_\beta^0 (\theta_0)
  + C\nu = 1 -\cos\beta + C\nu . 
\end{align}
This implies that $\theta_\nu(x) \in (-C \nu, \beta + C \nu)$ for all
$x > 0$ and some $C>0$ depending only on $\beta$ and $\eta_\beta$.
Hence for any fixed $\beta \in (0, \frac12 \pi)$ we can always choose
$\nu_0 > 0$ such that for all $\nu<\nu_0$ we have
$\theta_\nu(x) \in [-\frac12 \beta - \frac14 \pi, \frac12 \beta +
\frac14 \pi] \subset (-\frac12 \pi, \frac12 \pi)$
for all $x > 0$. Recall also that
$\theta_0(x) \in (0, \beta) \subset [-\frac12 \beta - \frac14 \pi,
\frac12 \beta + \frac14 \pi]$
for all $x > 0$. Thus, using mean value theorem, with some
$\tilde\theta(x)$ between $\theta_\nu(x)$ and $\theta_0(x)$, we arrive
at
\begin{align}
  |\sin \theta_\nu(x) - \sin \theta_0(x)| = |\cos \tilde\theta(x)|\, |
  \theta_\nu(x) - \theta_0(x)| \geq C |
  \theta_\nu(x) - \theta_0(x)| \qquad \forall x > 0,
\end{align}
for some $C > 0$ depending only on $\beta$. In particular, in view of
the uniform convergence of $\sin\theta_\nu$ to $\sin\theta_0$, for any
$\eps > 0$ and all $\nu$ sufficiently small we have
\begin{align}
  \sup_{x \in \R} |\theta_\nu(x) - \theta_0(x)| \leq
  C \sup_{x \in \R} 
  |\sin\theta_\nu(x) - \sin\theta_0(x)| <\eps.
\end{align}
This concludes the proof. \qed

\section{Proof of Theorem~\ref{t:gammabeta}}
\label{sec:proof-theor-gammabeta}

Let us first show that $\theta_\beta \to 0$ as $\beta \to 0$ uniformly
in $C(\overline{\R^+})$. We define
$\phi_\beta(x) = \max\{ \beta (1- x), 0\}$ for $0<\beta<{\pi \over 4}$
and all $x \in \R$. It is clear that
$E_\beta (\theta_\beta) \leq E_\beta (\phi_\beta)$ and, therefore,
after a straightforward computation,
\begin{align}
  \frac12 \int_0^\infty \left( |\theta_\beta'|^2 + 
  \sin^2 \theta_\beta \right)\, dx 
  &\leq  \frac12 \int_0^\infty \left( |\phi_\beta'|^2 + 
    \sin^2 \phi_\beta \right)\, dx +   {\nu \over 4 \pi} \int_0^\infty {\sin^2
    (\phi_\beta - \beta) - 
    \sin^2 (\theta_\beta - \beta) \over x} \, dx  \notag\\
  &+ {\nu \over 8 \pi} \int_0^\infty \int_0^\infty {(\sin (\phi_\beta(x) -
    \beta) - \sin (\phi_\beta(y) - \beta))^2 \over (x - y)^2} \, dx \,
    dy \notag \\  
  & \leq \beta^2 +   {\nu \over 4 \pi} \int_0^\infty {\sin^2
    (\phi_\beta - \beta) - 
    \sin^2 (\theta_\beta - \beta) \over x} \, dx  \notag\\
  &+ {\nu \over 8 \pi} \int_0^\infty \int_0^\infty {(\phi_\beta(x) -
    \phi_\beta(y))^2 \over (x - y)^2} \, dx \,
    dy \notag \\  
  &\leq \left(  {\nu \over 4 \pi} +1 \right) \beta^2 +  {\nu \over 4
    \pi} \int_0^\infty {\sin^2 (\phi_\beta - \beta) - 
    \sin^2 (\theta_\beta - \beta) \over x} \, dx.  \label{Itf}
\end{align}
Now, with the help of trigonometric identities and Young's inequality
we estimate the last integral in \eqref{Itf}:
\begin{align}
  & {\nu \over 4
    \pi} \int_0^\infty {\sin^2 (\phi_\beta - \beta) - 
    \sin^2 (\theta_\beta - \beta) \over x} \, dx \notag \\
  & \leq {\nu \over 4 \pi} \int_0^1 {\sin^2 (\phi_\beta - \beta) \over 
    x} \, dx + {\nu \over 4 \pi} \int_1^\infty {\sin (2\beta
    -\theta_\beta) \sin \theta_\beta  \over x} \, dx \notag \\ 
  & \leq {\nu \beta^2 \over 4 \pi} +  {\nu \over 4 \pi} \int_1^\infty
    {\sin 2\beta \cos \theta_\beta \sin \theta_\beta - \cos 2\beta
    \sin^2 \theta_\beta \over x} \, dx \notag \\ 
  & \leq  {\nu \beta^2 \over 4 \pi} + {\nu \beta \over 2 \pi}
    \int_1^\infty 
    {|\sin \theta_\beta | \over x}  \, dx \leq {\nu  (\nu +
    \pi) \beta^2 \over 4 \pi^2} + \frac{1}{4} \int_1^\infty
    \sin^2\theta_\beta  \, dx, 
\end{align}
recalling that $0 < \beta < {\pi \over 4}$ and, therefore,
$\cos 2 \beta > 0$.  Combining the above inequalities, we obtain
\begin{align}
  \label{Cb214}
  \frac14 \int_0^\infty \left( |\theta_\beta'|^2 + 
  \sin^2 \theta_\beta \right)\, dx \leq C \beta^2,
\end{align}
for some $C > 0$ depending only on $\nu$. Hence the left-hand side of
\eqref{Cb214} vanishes as $\beta \to 0$, and by \eqref{mm} this
implies $\theta_\beta \to 0$ uniformly in $C(\overline{\R^+})$ in this
limit.

Now we prove uniqueness of minimizers when $\beta$ is small
enough. From the arguments above it is clear that for all $\delta > 0$
and all $\beta$ sufficiently small we have
$\theta_\beta \in (-\delta, \delta)$. Therefore, if
$u_\beta := \sin(\theta_\beta -\beta)$, then
$\theta_\beta = \beta + \arcsin u_\beta \in C^1(\overline{\R^+})$ and
$u_\beta \in (-2 \delta, 2 \delta)$ for all sufficiently small
$\delta$. We rewrite the energy $E_\beta(\theta_\beta)$ in terms of
$u_\beta$:
\begin{align}
  E_\beta(\theta_\beta) 
  &= \frac12 \int_0^\infty \left( \frac{|u_\beta'|^2}{1-u_\beta^2} + 
    \sin^2(\beta + \arcsin u_\beta) + {\nu \over 4 \pi} \cdot
    \frac{u_\beta^2 - \sin^2(\eta_\beta -\beta)}{x} \right) dx \notag \\ 
  &+ {\nu \over 8 \pi} \int_0^\infty \int_0^\infty {(u_\beta(x) -
    u_\beta(y))^2 \over (x - y)^2} \, dy \, 
    dx \notag \\
  & - {\nu \over 8 \pi} \int_0^\infty \int_0^\infty {(\sin
    (\eta_\beta(x) - \beta) - \sin (\eta_\beta(y) - \beta))^2 \over (x 
    - y)^2} \, dy \, dx. \label{Econvex}
\end{align}
It is straightforward to show that the right-hand side of
\eqref{Econvex} is a strictly convex functional of $u_\beta$ for all
$\delta$ sufficiently small and, therefore, the minimizer of $E_\beta$
is unique when $\beta$ is small enough. \qed

\section{Numerics and discussion}
\label{sec:num}

We conclude this paper by presenting the results of some numerical
simulations that exhibit edge domain walls and discussing some of
their distinctive characteristics. We first perform a micromagnetic
simulation of the remanent magnetization configuration in a
ferromagnetic strip, using the simplified two-dimensional thin film
model from section \ref{sec:model} (for details of the numerical
algorithm, see \cite{mo:jcp06}). The result of the simulation is
presented in Figure~\ref{f:strip2d} and shows the long-time asymptotic
stationary magnetization configuration formed as the result of solving
the overdamped Landau-Lifshitz-Gilbert equation with the energy from
\eqref{Emd} in a strip of lateral dimensions $128.25 \times 32.25$ (in
the units of $L$). The initial condition was taken in the form of the
magnetization saturated to the direction slightly away from
vertical. The thin film parameter was fixed at $\nu = 20$.  The
dimensionless parameters above correspond, for example, to those of a
permalloy strip ($M_s = 8 \times 10^5$ A/m, $A = 1.3 \times 10^{-11}$
J/m and $K = 5 \times 10^2$ J/m$^3$ \cite{nistmumag}) with dimensions
$20.7 \mu$m$\times 5.2\mu$m$\times 4$nm, for which $\ell = 5.69$ nm
and $L = 161$ nm, giving an edge domain wall width of order 1 $\mu$m.

To obtain the one-dimensional domain wall profiles numerically, we
solve a parabolic version of \eqref{EL} for $\theta = \theta(x, t)$ in
$\R^+ \times \R^+$:
\begin{align}
  \label{ELt}
  \theta_t = \theta_{xx} - \sin \theta \cos \theta
  - {\nu \over 2} \cos (\theta - \beta) \left( -{d^2
  \over dx^2} \right)^{1/2} \sin (\theta - \beta) ,
\end{align}
with Dirichlet boundary condition
\begin{align}
  \label{th0t}
  \theta(0, t) = \beta,
\end{align}
and initial data
\begin{align}
  \label{th0x}
  \theta(x,0) = \frac{2 \beta}{1+e^{x/2}},
\end{align}
which is a monotonically decreasing function that asymptotes to zero
as $x \to +\infty$.  To solve the above problem, we employ a
finite-difference discretization and an optimal-grid-based method to
compute the stray field, extending $\theta$ by its boundary value to
$x < 0$. The details of the numerical method can be found, once again,
in \cite{mo:jcp06}. The domain wall profiles are then identified with
the steady states of \eqref{ELt} reached as $t \to \infty$.

\begin{figure}[t]
  \begin{center}
    \includegraphics[width=.9\textwidth]{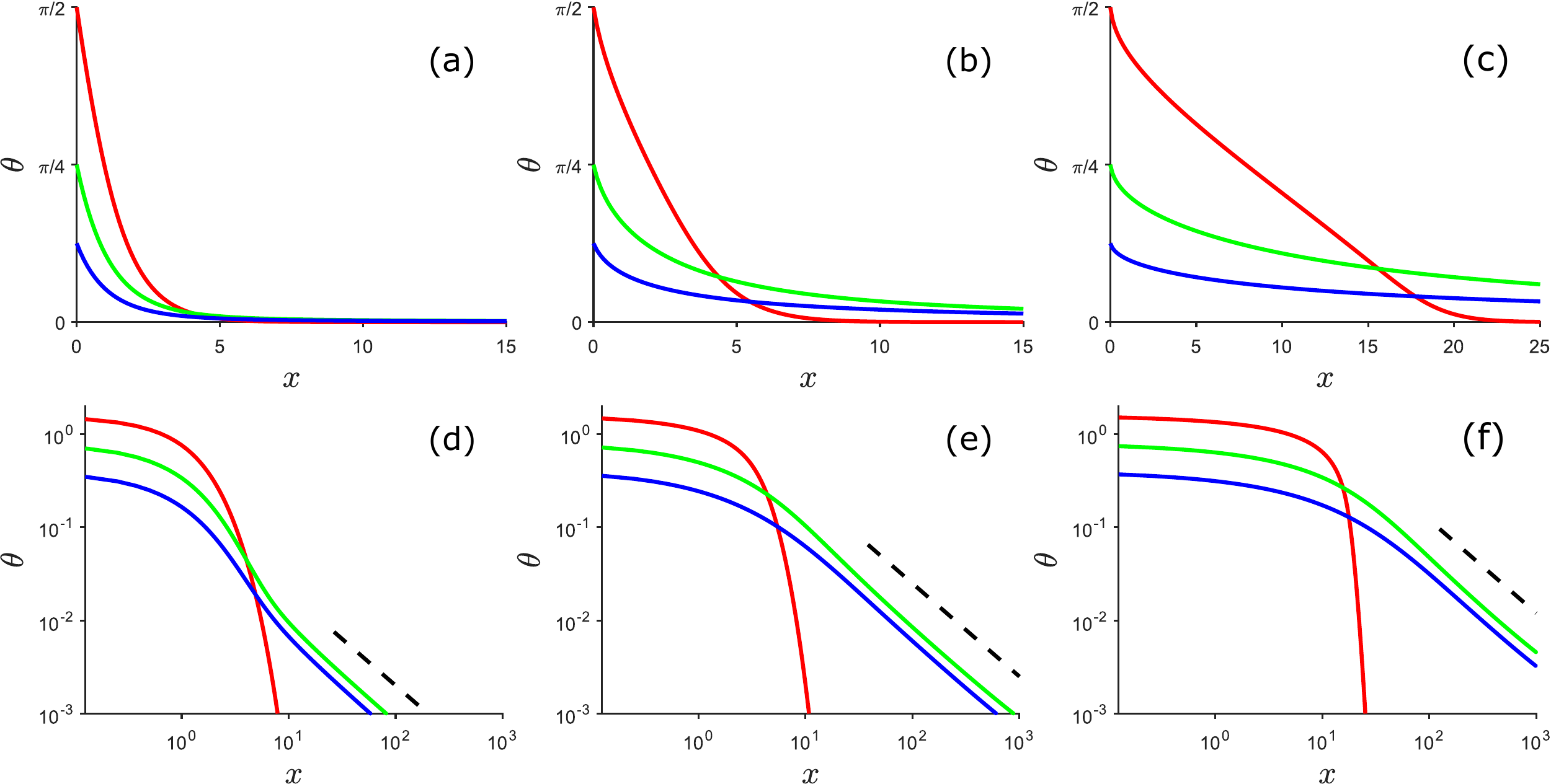}
    \caption{Computed boundary wall profiles for
      $\beta = \pi/2, \pi/4, \pi/8$. In panels (a) and (d), $\nu=1$;
      in panels (b) and (e), $\nu=10$; in panels (c) and (f),
      $\nu=50$. The upper panels (a)-(c) show the computed profiles
      for the given values of $\beta$ and $\nu$. The lower panels
      (d)-(f) show the same profiles in log-log coordinates, with the
      dashed lines indicating the $1/x$ decay.}
    \label{boundarywalls} 
  \end{center} 
\end{figure}

We collect the results of our numerical solution of the above problem
for a range of physically relevant values of $\beta$ and $\nu$ in
Figs. \ref{boundarywalls} and \ref{windingnonmono}. The upper panels
(a)-(c) of Fig. \ref{boundarywalls} show the profiles of edge domain
walls with $\beta$ equal to $\pi/2$ (red curves), $\pi/4$ (green
curves) and $\pi/8$ (blue curves). The lower panels show the same
profiles in log-log coordinates, with the dashed lines indicating
$1/x$ decay. Each pair of panels corresponds to a different value of
$\nu$: $\nu = 1$ in panels (a) and (d); $\nu = 10$ in panels (b) and
(e); $\nu = 50$ in panels (c) and (f). In all the simulations, a
discretization step $\Delta x = 0.125$ was used near the edge on a
non-uniform grid with a stretch factor $b = 20$ \cite{mo:jcp06} and
terminating at $x \simeq 6 \times 10^3$.  A 16-node optimal geometric
grid was used in the transverse direction to compute the stray field
\cite{ingerman00,mo:jcp06}.

\begin{figure}
  \centering
    \includegraphics[width=\textwidth]{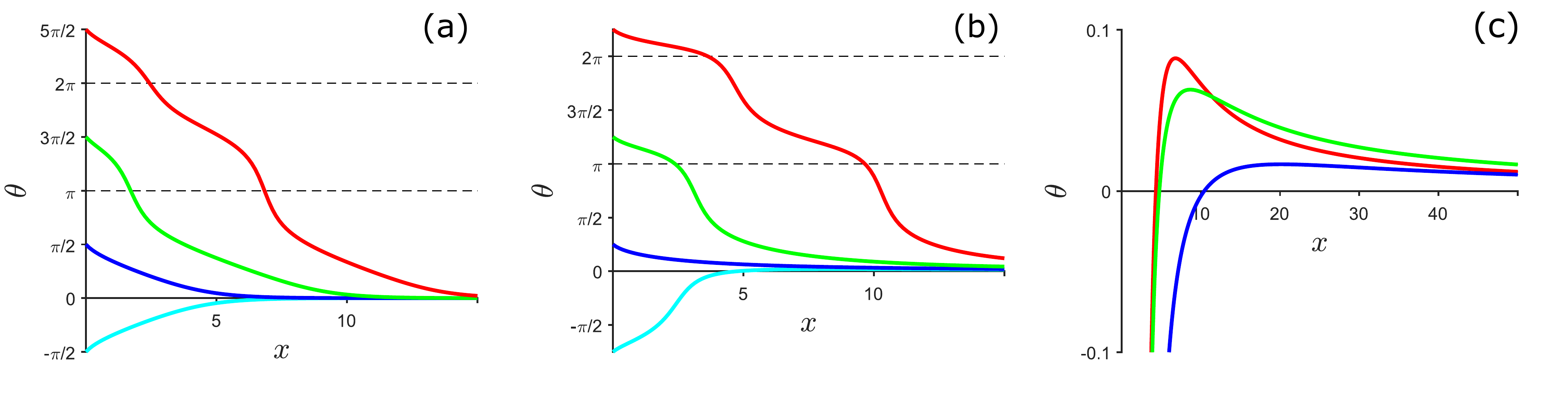} 
    \caption{Edge domain walls exhibiting winding and lack of
      monotonicity obtained by solving \eqref{ELt} for $\nu = 10$ and
      different values of $\beta$. In (a),
      $\beta = -\pi/2, \pi/2, 3 \pi/2, 5 \pi/2$. In (b),
      $\beta = -3 \pi / 4, \pi/4, 5 \pi / 4, 9 \pi/4$. In (c), the
      non-monotone decay in the tails of the solutions for
      $\beta = -5\pi/8$ (red), $\beta = -3\pi/4$ (green) and
      $\beta = -7\pi/8$ (blue) at large $x$ is emphasized. }
  \label{windingnonmono}
\end{figure}

For $0 < \beta \leq \pi/2$ the obtained domain wall profiles exhibit
monotonic decay from $\theta = \beta$ at $x = 0$ to $\theta = 0$ at
$x = +\infty$.  Thus, qualitatively the profiles are similar to those
in \eqref{nostray} corresponding to the case $\nu = 0$. This is in
agreement with the predictions of Theorem \ref{t:gammanu} and Theorem
\ref{t:gammabeta} for the cases $\nu \lesssim 1$ and
$\beta \lesssim {\pi \over 2}$, respectively. At the same time, one
can see from Figs. \ref{boundarywalls}(b) and \ref{boundarywalls}(c)
that as the value of $\nu$ is increased, the profiles develop a
multiscale structure, whereby an inner core forms near the edge on the
$O(\nu)$ length scale for $\nu \gg 1$, followed by either an
exponential ($\beta = \pi/2$) or an algebraic ($\beta \not= \pi/2$)
tail. Heuristically, this scaling may be derived by balancing the
anisotropy and the stray field terms in \eqref{EL}. A structure
similar to this was reported previously for N\'eel walls at large
values of $\nu$ \cite{garcia99,melcher03,garcia04,mo:jcp06,hubert}.
Notice that all the profiles are regular and exhibit a finite slope
near the edge, in agreement with Theorem \ref{t:regular}.

Focusing on the decay of the domain wall profiles, we observe that
even though the overall shape of the profile may be qualitatively
similar to that in \eqref{nostray}, they exhibit slow algebraic decay
for all $\beta \not= \pi/2$, even for small values of $\nu$, see
Figs. \ref{boundarywalls}(d)--(f). In all those cases, the profiles
exhibit a decay rate proportional to $1/x$, which can be explained by
the fact that there is a build-up of magnetic charge near the material
edge, which results in a stray field decaying like $1/x$ away from the
edge. This should be contrasted to the asymptotic $1/x^2$ decay
observed in N\'eel walls \cite{cm:non13}. At the same time, for the
special value of $\beta = \pi/2$ the decay becomes exponential, which
can also be seen directly from \eqref{EL}. Indeed, when
$\beta = \pi/2$, the $\cos (\theta(x) - \beta)$ factor multiplying the
contribution of the stray field vanishes as $x \to +\infty$, making
the anisotropy term dominate at large values of $x$ and, therefore,
resulting in exponential decay.

We note that the domain wall profiles obtained by us numerically in
Fig. \ref{boundarywalls} are not guaranteed to be those corresponding
to the global energy minimizers in Theorem \ref{t:exist}. Instead,
they may correspond only to local energy minimizers. In fact, neither
monotonicity, nor uniqueness of the energy minimizing edge domain wall
profiles are known a priori, in contrast to the N\'eel walls in the
bulk of the material \cite{cm:non13,my:prsla16}. In order to assess
whether other types of local minimizers may exist in the problem under
consideration, we performed further numerical studies of solutions of
\eqref{ELt}--\eqref{th0x} by considering values of $\beta$ outside the
interval $(0, \pi/2]$. The obtained stationary solutions for
$\nu = 10$ are shown in Fig. \ref{windingnonmono}. All these solutions
decay to zero as $x \to \infty$, indicating a nontrivial winding
(i.e., a variation of $\theta$ by more than $\pi/2$) in each one for
$\beta \not\in [0, \pi/2]$. We emphasize that these domain wall
profiles are stabilized by nonlocal effects, since in the absence of
stray field, i.e., when $\nu = 0$, such solutions do not exist. At the
same time, the solutions with winding appear to have higher energy
than the corresponding ones in Fig. \ref{boundarywalls} for the same
value of $\nu$, indicating that the global energy minimizers do not
exhibit winding. Furthermore, for $-\pi < \beta < -\pi/2$ the
solutions exhibit overshoot and a non-monotone decay to zero as
$x \to +\infty$, see Fig. \ref{windingnonmono}(c). Thus, monotonicity
of the initial data in \eqref{th0x} is not preserved under
\eqref{ELt}. Let us also mention that we tried different non-monotone
initial conditions, but did not obtain any other solutions than those
shown in Fig. \ref{windingnonmono}. However, monotone solutions with
larger winding were observed numerically for still larger values of
$\beta$. According to our numerics, it appears that edge domain wall
solutions with arbitrarily large winding are possible.

To conclude, we note that an a priori lack of monotonicity is an
obstacle for proving the precise asymptotic decay of the profiles,
using the methods of \cite{cm:non13}. A broader question of interest
is whether the one-dimensional domain wall profiles in Theorem
\ref{t:exist} are also minimizers, in some suitable sense, of the
two-dimensional micromagnetic energy in \eqref{E0}. It is well known
that magnetic domains often exhibit spatially modulated exit
structures near the material boundary \cite{hubert}, and spatially
periodic and more complicated two-dimensional edge domains have been
observed experimentally in thin films with strong in-plane crystalline
anisotropy \cite{dennis02}.

\appendix
\section{One-dimensional energy}
\label{append}

For the reader's convenience, below we present a derivation of the
one-dimensional energy in \eqref{Ebm} from the two-dimensional energy
in \eqref{E0} and then collect some rather well-known facts about the
one-dimensional fractional Sobolev norm appearing throughout our
paper.

The energy $E_0(\m)$ in \eqref{E0} with $\mathbf h$ set to zero may be
equivalently written as
\begin{align}
  \label{E0equiv}
  E_0(\m) = \frac12 \int_D \left( |\nabla \m|^2 + m_1^2 - \varphi
  \nabla \cdot \m \right) d^2 r, \qquad \varphi(\mathbf r) = -{\nu \over 4
  \pi} \int_D {\nabla \cdot \m(\mathbf r') \over |\mathbf r -
  \mathbf r'|} \, d^2 r',
\end{align}
where $\varphi$ is the stray field potential.  Since we are interested
in one-dimensional transition profiles in the vicinity of the material
edge, we assume $D$ to be an infinite strip of width $w$ oriented at
an angle $\beta \in [0, \pi/2]$ with respect to the easy axis (see
Fig. \ref{f:strip}) and consider one-dimensional magnetization
configurations. Setting $\mathbf m = \mathbf m(x)$ and
$\varphi = \varphi(x)$, where $x$ is the normal coordinate to the
strip axis defined in \eqref{x}, the energy in \eqref{E0equiv} per
unit length becomes
\begin{align}
  \label{EbmA}
  E_{\beta,w}(\mathbf m) 
  & = 
    \frac12 \int_0^w \left(
    |m_1'(x)|^2 + |m_2'(x)|^2 + m_1^2(x) \right) dx \notag \\
  & + {\nu \over 8 \pi} \lim_{L \to \infty}
    \int_{-L}^L \int_0^w \int_0^w \frac{m_\beta'(x)
    m_\beta'(y)}{\sqrt{|x-y|^2+|s|^2}} \, \, dx \, dy\,ds,
\end{align}
where $m_\beta = \mathbf e_\beta \cdot \mathbf m$, with
$\mathbf e_\beta = (\cos \beta, \sin \beta)$ and
$m_\beta(0)=m_\beta (w) =0$, and we noted that the contributions of
the magnetic dipoles to the stray field potential at distances
$|s| \gg w$ are $O(|s|^{-2})$, making the last integral in
\eqref{EbmA} convergent as $L \to \infty$. We compute
\begin{align}
  \int_{-L}^L  \frac{1}{\sqrt{|x-y|^2+|s|^2}}\,
  ds  = 2 \log \left(\frac{\sqrt{L^2+x^2}+L}{x}\right)
  = 2 \ln |x-y|^{-1} + 2 \ln (2L) + O(a^2) ,
\end{align}
with $a=\frac{|x-y|}{L}$. We know that $|x-y| \leq w$ for all
$x,y \in (0,w)$ and therefore $a \to 0$ uniformly in $x,y$ as
$L \to \infty$. Using the fact that $m_\beta(0)=m_\beta(w)=0$, we then
obtain \eqref{Ebm} for an arbitrary $\m \in H^1((0,w); \R^2)$
satisfying \eqref{mbbc}.

Now we derive several other representations of the one-dimensional
energy in \eqref{Ebm}. We can extend $m_\beta$ by zero outside the
interval $(0,w)$ and rewrite the energy as
\begin{align}
  \label{EbmA3}
  E_{\beta,w}(\mathbf m) = \frac12 \int_0^w \left( |m_1'|^2 + |m_2'|^2
  + m_1^2 \right) dx + {\nu \over 4 \pi} \int_{-\infty}^\infty
  \int_{-\infty}^\infty \ln |x - y|^{-1} \, m_\beta'(x) m_\beta'(y) \,
  dx \, dy. 
\end{align}
Next we want to rewrite the nonlocal part of the energy in Fourier
space. It is clear that $m_\beta \in H^1(\R) \cap L^1(\R)$ but
$\ln |x|^{-1}$ diverges at infinity. We introduce the following
function approximating $\ln |x|^{-1}$:
\begin{align}
  K_a (x) := e^{-a |x| } \ln |x|^{-1}, \quad a>0.
\end{align}
It is clear that $K_a(x) \in L^p(\R)$ for all $p< \infty$. Moreover,
we have as $a \to 0$
\begin{align}\label{non-con}
  {\nu \over 4 \pi} \int_{-\infty}^\infty \int_{-\infty}^\infty
  K_a(x-y) \,  m_\beta'(x) m_\beta'(y) \, dx \, dy \to  {\nu \over 4
  \pi} \int_{-\infty}^\infty \int_{-\infty}^\infty \ln |x - y|^{-1} \, 
  m_\beta'(x) m_\beta'(y) \, dx \, dy.
\end{align}
Here we used dominated convergence theorem, with the help of Young's
inequality and the fact that $m_\beta$ has compact support. Now we
can use $L^2$ Fourier transform (as defined in \eqref{F}) to obtain
\cite{lieb-loss}
\begin{align}\label{FTt}
  \int_{-\infty}^\infty \int_{-\infty}^\infty K_a(x-y) \, 
  m_\beta'(x) m_\beta'(y) \, dx \, dy =  \int_{-\infty}^\infty
  k^2 \widehat{K}_a(k) | \widehat{m}_\beta (k)|^2 \, {dk \over 2 \pi} , 
\end{align}
where \cite[Eq. (6) on p. 18]{bateman}
\begin{align}
  \widehat{K}_a(k) = {2 a \gamma + a \ln
  (k^2+a^2) + 2 k \arctan(k/a) \over k^2 + a^2},
\end{align}
and $\gamma \approx 0.5772$ is the Euler constant.  

We can split the integral in the right hand-side of \eqref{FTt} as
\begin{align}
  \int_{-\infty}^\infty  k^2 \widehat{K}_a(k) | \widehat{m}_\beta
  (k)|^2 \,  {dk \over 2 \pi}  = \int_{|k| < 1}  k^2 \widehat{K}_a(k)
  | \widehat{m}_\beta (k)|^2 \,  {dk \over 2 \pi}  + \int_{|k|>1}
  k^2 \widehat{K}_a(k) | \widehat{m}_\beta (k)|^2 \,  {dk \over 2
  \pi}.   
\end{align}
Using the facts that $\widehat{m}_\beta(k) \in L^2(\R)$,
$k \widehat{m}_\beta(k) \in L^2(\R)$ and
\begin{align}
  \widehat{K}_a(k)  \to  \frac{\pi}{|k|} \quad \hbox{ as $a \to 0$
  uniformly in $k$ for all } |k| \geq 1, \\ 
  k^2 \widehat{K}_a(k) \to  {\pi}{|k|} \quad \hbox{ as $a \to 0$
  uniformly in $k$ for all } |k| \leq 1, 
\end{align}
we obtain
\begin{align}\label{FTtc}
  \int_{-\infty}^\infty \int_{-\infty}^\infty K_a(x-y) \, 
  m_\beta'(x) m_\beta'(y) \, dx \, dy  \to \frac12
  \int_{-\infty}^\infty |k| | \widehat{m}_\beta (k)|^2 \, dk. 
\end{align}
Combining \eqref{non-con}, \eqref{FTt} and \eqref{FTtc} we arrive at
the following representation for the energy in \eqref{EbmA3}:
\begin{align}
  \label{Ebm1}
  E_{\beta,w}(\mathbf m) = \frac12 \int_0^w \left( |m_1'|^2 + |m_2'|^2
  + m_1^2 \right) dx + {\nu \over 4} \int_{-\infty}^\infty  |k| | 
  \widehat{m}_\beta (k)|^2 \, {dk \over 2 \pi},
\end{align}
which is equivalent to \eqref{Ebm2}.
%
%

We now want to rewrite the nonlocal term in the real space involving
only $m_\beta$, but not $m'_\beta$. In order to do this we note that
\begin{align}
  \frac{1}{2 \pi} \int_{-\infty}^\infty \frac{|1- e^{ikz}|^2}{z^2}\,
  dz = \frac{1}{\pi} \int_{-\infty}^\infty \frac{1- \cos(k z)}{z^2} \,
  dz = |k|, 
\end{align}
and obtain
\begin{align}
  \int_{-\infty}^\infty  |k| | \widehat m(k)|^2 \, dk
  & = \int_{-\infty}^\infty \left(
    \int_{-\infty}^\infty \frac{|1- e^{ikz}|^2}{z^2}\, dz \right) |
    \widehat{m}_\beta(k)|^2 \, \frac{dk}{2\pi}  \notag \\ 
  & =  \int_{-\infty}^\infty \frac{1}{z^2} \left(
    \int_{-\infty}^\infty  {|(1- e^{ikz}) \widehat{m}_\beta(k)|^2 } \,
    \frac{dk}{2\pi}  \right)\, dz \notag \\ 
  &= \int_{-\infty}^\infty  \int_{-\infty}^\infty
    \frac{ |{m_\beta} (x) - m_\beta (x+z)|^2}{z^2} \, dz \, dx \notag
  \\ 
  & = \int_{-\infty}^\infty  \int_{-\infty}^\infty
    \frac{ |{m_\beta} (x) - m_\beta (y)|^2}{|x-y|^2} \, dy \, dx. 
\end{align}
Therefore we can rewrite the original energy in the following way:
\begin{align}
  \label{EbmA5}
  E_{\beta,w}(\mathbf m) = \frac12 \int_0^w \left( |m_1'|^2 + |m_2'|^2 +
  m_1^2 \right) dx + {\nu \over 8 \pi} \int_{-\infty}^\infty
  \int_{-\infty}^\infty {(m_\beta(x) - m_\beta(y))^2 \over (x - y)^2}
  \, dx \, dy,
\end{align}
which coincides with \eqref{Ebm3}.

\paragraph*{Acknowledgements.}

The work of RGL and CBM was supported, in part, by NSF via grants
DMS-1313687 and DMS-1614948. VS would like to acknowledge support from
EPSRC grant EP/K02390X/1 and Leverhulme grant RPG-2014-226.


\end{document}